\declaretheorem[parent=section]{theorem}
\declaretheorem[sibling=theorem]{proposition}
\declaretheorem[sibling=theorem]{lemma}
\declaretheorem[sibling=theorem]{corollary}
\declaretheorem[sibling=theorem,style=remark]{remark}
\declaretheorem[sibling=theorem,style=remark]{example}
\declaretheorem[sibling=theorem,style=definition]{definition}
\declaretheorem[numbered=no,name=Theorem]{theorem*}
\declaretheorem[numbered=no,style=definition,name=Definition]{definition*}
\declaretheorem[numbered=no,name=Proposition]{proposition*}
\begin{document}
\bibliographystyle{alpha}

\newcommand{\mi}{\ensuremath{\mathrm{i}}}
\newcommand{\me}{\ensuremath{\mathrm{e}}}
\newcommand{\mPS}{\ensuremath{(\Omega,\mathcal{F},P)}}
\newcommand{\Id}{\on{Id}}
\newcommand{\probSpace}{\ensuremath{(\Omega,\mathcal{F},P)}}
\newcommand{\fprobSpace}{\ensuremath{(\Omega,\mathcal{F}_t,P)}}
\newcommand{\prob}{\ensuremath{\operatorname{P}}}
\newcommand{\fps}{\ensuremath{\left( \Omega,\mathcal{F},(\mathcal{F}_t)_{t
        \geq 0}, \prob \right)}}
\newcommand{\stset}{\ensuremath{\mathcal{T}}}
\newcommand{\ind}{1{\hskip -2.5 pt}\hbox{\textnormal{I}}}

\newcommand{\on}[1]{\operatorname{#1}}
\newcommand{\abs}[1]{\left\vert #1 \right\rvert}
\newcommand{\norm}[1]{\left\lVert #1 \right\rVert}
\newcommand{\Ex}[1]{\operatorname{E} \left[ #1 \right]}
\newcommand{\qvar}[2]{\langle #1, #2 \rangle}
\newcommand{\Set}[1]{\left\{ #1 \right\}}
\newcommand{\diff}[1]{\operatorname{d}\ifthenelse{\equal{#1}{}}{\,}{#1}}

\newcommand{\E}{\operatorname{E}}
\newcommand{\C}{\mathbb{C}}
\newcommand{\R}{\mathbb{R}}
\newcommand{\Q}{\mathbb{Q}}
\newcommand{\N}{\mathbb{N}}
\newcommand{\Z}{\mathbb{Z}}
\newcommand{\LL}{\on{L}}

\newcommand{\Cum}{\mathcal{K}}

\newtheorem*{ld_thm:mix}{Theorem \ref{ld_thm:mix}}

\author{Simon Campese}
\email{campese@mat.uniroma2.it}
\address{Universit\`a degli Studi di Roma Tor Vergata, Dipartimento di
  Matematica, Via della Ricerca Scientifica 1, 00133 Roma, Italy}
\title[Complex Fourth Moment Theorems]{Fourth Moment Theorems for complex
  Gaussian approximation} 
\date{\today}
\thanks{Research was supported by ERC grant 277742 Pascal}
\begin{abstract}
  {
  We prove a bound
  for the Wasserstein distance between vectors of smooth complex random
  variables and complex Gaussians in the framework of complex Markov diffusion
  generators. For the special case of chaotic   
  eigenfunctions, this bound can be expressed in terms of certain fourth
  moments of the vector, yielding a quantitative Fourth Moment Theorem for
  complex Gaussian approximation on complex Markov diffusion chaos. This extends
  the results of~\cite{azmoodeh_fourth_2014}
  and~\cite{campese_multivariate_2015} for the real case. Our main ingredients
  are a complex version of the so called $\Gamma$-calculus and Stein's method
  for the multivariate complex Gaussian distribution.} 
\end{abstract}
\subjclass[2000]{60F05, 60J35, 60J99}
\keywords{Fourth Moment Theorems, Markov diffusion generator, Markov Chaos, Stein's method,
  complex normal distribution, $\Gamma$-calculus}
\maketitle

\section{Introduction}
\label{s-sect-subs-calc}

Let $F_n$ be a sequence of random variables and $\gamma$ be a probability
measure. A Fourth Moment Theorem for $F_n$ holds, if there exists a
polynomial $P=P(m_1(F_n),\dots,m_4(F_n))$ in the first four moments of $F_n$
such that $P(m_1(F_n),\dots,m_4(F_n)) \to 0$ characterizes, or at least implies
the convergence of the laws of $F_n$ towards $\gamma$. The first discovery of
such a Fourth Moment Theorem dates back to 2005, when Nualart and Peccati, in
their seminal paper~\cite{nualart_central_2005}, characterized 
convergence in distribution of a sequence $F_n$ of multiple Wiener-It\^o
integrals towards a Gaussian distribution by the convergence of the moment
sequence $m_4(F_n)-3m_2(F_n)^2$. The next developments were an extension to the
multidimensional case by Peccati and Tudor in~\cite{peccati_gaussian_2005}, the
introduction of Malliavin calculus by Nualart and Ortiz-Latorre
in~\cite{nualart_central_2008} and, starting in~\cite{nourdin_steins_2009} by
Nourdin and Peccati and then followed by numerous other contributions of the
same two authors and their collaborators, the combination of Malliavin calculus
and Stein's method to obtain quantitative central limit theorems in strong
distances. This Malliavin-Stein method has found widespread applications, for
example in statistics, mathematical physics, stochastic geometry or free
probability {(see the webpage~\cite{steinmalliavin} for an
  overview)}. For a textbook 
introduction to the method, we refer to~\cite{nourdin_normal_2012}.

Recently, initiated by Ledoux in~\cite{ledoux_chaos_2012} and then further
refined by Azmoodeh, Campese and Poly in~\cite{azmoodeh_fourth_2014}, it was
shown that Fourth Moment Theorems hold in the very general framework of Markov
diffusion generators (see~~\cite{bakry_analysis_2014} for an
{exhaustive} treatment 
of this framework), in which the aforementioned setting of multiple
Wiener-It\'o integrals appears as the special case of the Ornstein-Uhlenbeck 
generator. This abstract point of view not only allows for a drastically
simplified proof of Nualart and Peccati's classical Fourth Moment Theorem, but
also provides 
new Fourth Moment Theorems for previously uncovered structures such as Laguerre
and Jacobi chaos and {new} target laws (such as the Beta
distribution). We refer  
to~\cite{azmoodeh_fourth_2014} for details. Stein's method continues
to work in this framework as well {and can be used to associate
  quantitative estimates to these results}. The 
abstract framework also led to new results for the Ornstein-Uhlenbeck setting,
such as an ``even-moment theorem'' (see~\cite{azmoodeh_generalization_2013}) and
advances regarding the Gaussian product conjecture
(see~\cite{malicet_squared_2015}).   

In this paper, we extend this abstract diffusion generator framework to the
complex case, thus covering complex valued random variables.
Our main result is a quantitative bound between the Wasserstein distance of a
multivariate complex Gaussian random vector and vectors of square integrable
complex random variables in the domain of a carr\'e du champ operator associated
to a diffusive Markov generator (all unexplained terminology will be
{introduced} 
below). For the case of chaotic 
eigenfunctions, this bound can be expressed purely in terms of the second and
fourth absolute moments of the vector and thus yields a quantitative complex
Fourth Moment Theorem. To obtain these results, we extend Stein's method to
cover the complex Gaussian distribution and develop a complex version of the so
called $\Gamma$-calculus, in which a central role will be played by the Wirtinger
derivatives $\partial_z$ and $\partial_{\overline{z}}$.

Of course, from a purely algebraic point of view, our approach is 
equivalent to the multidimensional real case, which has been treated
in~\cite{campese_multivariate_2015}, and indeed one could handle
sequences of complex valued random variables by separating real and
imaginary part and considering them as two-dimensional real vectors. For the
complex Ornstein-Uhlenbeck generator, such an ad-hoc strategy has been followed
in~\cite{chen_fourth_2014} and~\cite{chen_product_2014}.
However, by staying completely inside the complex domain, our approach has the
advantage of making available many powerful tools connected to concepts such as
holomorphy from complex analysis which have no equivalents in the real
case. Although not needed for an abstract derivation of our main results, these
tools might be useful in the future, also when taking the reverse route and
trying to prove results for the real case by translating them to the complex 
domain. {Even for the special case of the complex Ornstein-Uhlenbeck
  generator, there is much structure present in the complex domain, such as a
  fine decomposition of the eigenspaces as shown in Example~\ref{ex:1} or the
  unitary group from complex White noise analysis
  (see~\cite[Ch. 7]{hida_brownian_1980}), which has no real counterparts. }

Complex random variables are encountered naturally in many applications and
indeed this paper was motivated by the study of spin random fields arising in  
{ cosmology (see for
  example~\cite{baldi_representation_2014,geller_spin_2010,malyarenko_invariant_2013,marinucci_random_2011}). A
  followup paper with an application of our results to this case is in
  preparation.}   

The rest of this paper is organized as follows. In
Section~\ref{s-sect-calc-z=x+mi}, we introduce the notation used throughout the
rest of this paper and provide some necessary background material. The abstract
complex diffusion generator framework is presented in
Section~\ref{s-as-real-case}{, while} our main results are stated
and proved in  
Section~\ref{s-now-let-z}.

\section{Preliminaries}
\label{s-sect-calc-z=x+mi}

\subsection{Wirtinger calculus}
\label{s-sect-mall-oper}
Let $x,y\in \R$ be two real variables and define the complex variables $z=x +
\mi y$ and $\overline{z} = x-\mi y$, where $\mi^2=-1$. Then, every function
$\widetilde{f}=\widetilde{f}(x,y) \colon \R^{2} \to \C$ can be considered as a
function $f=f(z) \colon \C \to \C$ via the identity
\begin{equation}
  \label{eq:7}
  \widetilde{f}(x,y) = \widetilde{f} \left( \frac{z+\overline{z}}{2},
    \frac{z-\overline{z}}{2} \right) =  f(z).
\end{equation}
Conversely, {every} function $f=f(z) \colon \C \to \C$ can be considered
as a function $\widetilde{f}=\widetilde{f}(x,y) \colon \R^{2} \to \C$ by writing
\begin{equation}
  \label{eq:10}
  f(z) = f(x+ \mi y) = \widetilde{f}(x,y).
\end{equation}
With this notation, the Wirtinger derivatives are defined as
\begin{equation*}
  \partial_z = \frac{1}{2} \left( \partial_x - \mi \partial_y \right)
  \qquad \text{and} \qquad
  \partial_{\overline{z}} = \frac{1}{2} \left( \partial_x + \mi \partial_y
  \right),
\end{equation*}
where, here and in the following, we use the shorthand
$\partial_{a} = \frac{\partial}{\partial a}$, and, more generally,
$\partial_{a_1 a_2 \dots a_d} =
\frac{\partial^d}{\partial_{a_{1}} \partial_{a_2} \dots \partial_{a_d}}$ to
denote derivatives with respect to the variables $a_1,\dots,a_d$ (the concrete
interpretation as a partial or Wirtinger derivative { will always be
clear from the context}). 
Starting from the Wirtinger derivatives $\partial_z$ and $\partial_{\overline{z}}$,
one can get back the partial derivatives $\partial_x$ and $\partial_y$ by the
identities 
\begin{equation*}
  \partial_x = \partial_z + \partial_{\overline{z}} 
  \qquad \text{and} \qquad
  \partial_y = \mi \left( \partial_z - \partial_{\overline{z}} \right).
\end{equation*}
It is straightfoward to see that both Wirtinger derivatives are
linear and merit their names by satisfying the derivation properties (product rules)
\begin{equation*}
  \partial_z (fg) = (\partial_z f) g + f \partial_z g
  \qquad \text{and} \qquad
  \partial_{\overline{z}} (fg) = (\partial_{\overline{z}} f) g + f \partial_{\overline{z}} g,
\end{equation*}
Moreover, the conjugation identities
\begin{equation*}
  \overline{\partial_z f} = \partial_{\overline{z}} \overline{f}
  \qquad \text{and} \qquad
  \overline{\partial_{\overline{z}} f } = \partial_z \overline{f}.
\end{equation*}
hold. The chain rules take the form 
\begin{align*}
  \partial_z (f \circ g) &= \left((\partial_z f) \circ g \right) \partial_z g +
  \left( \left( \partial_{\overline{z}} f \right) \circ g  \right) \partial_z
  \overline{g}, \\
  \partial_{\overline{z}} (f \circ g)
                         &=
                           \left( \left( \partial_z f \right) \circ g
                           \right) \partial_{\overline{z}} g
                           +
                           \left(  \left( \partial_{\overline{z}} f \right)
                           \circ g 
                           \right) \partial_{\overline{z}} \overline{g}.
\end{align*}
In particular, we see that $\partial_z z
= 1$, $\partial_z \overline{z}=0$, $\partial_{\overline{z}} \overline{z}=1$ and 
$\partial_{\overline{z}} z = 0$, which, in view of the chain and product rule,
allows to formally treat $z$ and $\overline{z}$ as if they were independent
variables when differentiating. {Heuristically}, when applying the
Wirtinger derivatives to a function $f \colon \C \to \C$, one does not need to
consider $f(z)$ as a function 
$\widetilde{f}(x,y)$ and then compute the partial derivatives with respect to
$x$ and $y$, but can instead directly apply the formal rules of differentiation
to the complex variables $z$ and $\overline{z}$. For example, we have 
for $p,q \neq -1$ that 
$\partial_z (z^p \overline{z}^q)=pz^{p-1}\overline{z}^q$ and
$\partial_{\overline{z}} (z^p\overline{z}^q) = qz^p\overline{z}^{q-1}$.

In the sequel, we will work in general dimension $d \geq 1$, considering 
functions $f \colon \C^d \to \C$ and $\widetilde{f} \colon \R^{2d} \to \C$ which
continue to be related through~\eqref{eq:7} and~\eqref{eq:10}, where now
$x=(x_1,\dots,x_d)$, $y=(x_1,\dots,x_d)$ and $z=(z_1,\dots,z_d)$ are vectors of
variables. For these functions, we define the gradients $\nabla f$ and
$\overline{\nabla}f$ by  
\begin{equation*}
  \nabla f = \left( \partial_{z_1} f, \partial_{z_2} f, \dots, \partial_{z_d} f
  \right)^{T}
\end{equation*}
and
\begin{equation*}
  \overline{\nabla} f = \left( \partial_{\overline{z}_1}
    f, \partial_{\overline{z}_2} f, \dots, \partial_{\overline{z}_d} f \right)^T,
  \end{equation*}
and the complex Hessians $\nabla\nabla f$,
$\overline{\nabla\nabla} f$, $\overline{\nabla} 
\nabla f$ and $\nabla \overline{\nabla} f$ by
\begin{align*}
  \nabla\nabla f &= \left( \partial_{z_jz_k} f
                                              \right)_{1 \leq j,k \leq d}, 
                   &
  \overline{\nabla\nabla} f &= \left( \partial_{\overline{z}_j \overline{z}_k} f
                              \right)_{1 \leq j,k \leq d},
  \\                              
  \overline{\nabla} \nabla f &= \left( \partial_{\overline{z}_j z_k} f \right)_{1
                               \leq j,k \leq d}
                               &\text{and \qquad}
  \nabla \overline{\nabla} f &= \left( \partial_{z_j \overline{z}_k} f \right)_{1
  \leq j,k \leq d}. 
\end{align*}
With some abuse of notation, we say that $f$ is an element of $\mathcal{C}^{m}(\C^d)$, if
the associated function $\widetilde{f}$ belongs to
$\mathcal{C}^m(\R^{2d})$. Similarly, a function $f$ has bounded Wirtinger
derivatives up to some order $m$, if the associated function~$\widetilde{f}$ has
bounded partial derivatives up to this order. 

\subsection{The complex normal distribution}
\label{s-some-measure-nu}

Given a probability space $\mathcal{P}=(\Omega,\mathcal{F},P)$ and two
real-valued random variables $X$ and $Y$, the quantity $Z=X+\mi Y$ is called a
complex valued random variable. The characteristic function, law and, if it
exists, density of $Z$ are defined as being the corresponding quantities of
the two-dimensional real random vector $(X,Y)$. With the notation of the
previous subsection, we clearly have that if $f(z)$ is the density of $Z$, then
$\widetilde{f}(x,y)$ is the density of $(X,Y)$, and the analogous statement is
true for the law. From the characteristic function~$\widetilde{\rho}(\xi,\upsilon)$
of $(X,Y)$, we readily calculate the characteristic function
$\rho(\zeta)=\rho(\xi+ \mi \upsilon)$ of $Z$ to be
\begin{equation*}
  \widetilde{\rho}(\zeta) = \Ex{\me^{\mi \mathfrak{Re} (\left\langle \zeta,Z \right\rangle_{\C^d})}},
\end{equation*}
where, here and in the following, $\on{E}$ denotes mathematical expectation and
$\mathfrak{Re}(z)$ and $\mathfrak{Im}(z)$ stand for the real and imaginary parts
of a complex number~$z$.

\begin{definition}[multivariate complex normal distribution]
  \label{def:2}
  For $d\geq 1$, $\mu \in \C^d$ and a positive definite $d \times d$ Hermitian
  matrix 
  $\Sigma$, a complex random vector $Z \in \C^d$ is said to follow a \emph{multivariate
    complex normal distribution} with mean $\mu$ and covariance $\Sigma$,
  short: $Z \sim C\mathcal{N}_d(\mu,\Sigma)$, if it has the density function 
  \begin{equation}
    \label{eq:34}
  f(z) =
  \frac{1}{\pi^d \abs{\on{det} \Sigma}} \on{exp} \left( -  \overline{\left( z -
    \mu\right)}^{T} \Sigma^{-1} \left( z - \mu \right) \right),
\end{equation}
where $A^{T}$ denotes the transpose of $A$.
\end{definition}

\begin{remark}
  \label{rem-gauss}
  Let $Z \sim C\mathcal{N}_d(\mu,\Sigma)$.
  \begin{enumerate}[(i)]
  \item
  Straightforward calculations show that
   \begin{equation*}
     \Ex{ (Z-\mu) \overline{(Z_k-\mu)}^{T} } = \Sigma
   \end{equation*}
   and
   \begin{equation}
     \label{eq:35}
      \Ex{ (Z-\mu) (Z - \mu)^T} = 0.
    \end{equation}
    Furthermore, it can be shown that $Z$ is \emph{circularly symmetric}: For
    any $\alpha \in \R$, the rotated vector $\me^{\mi \alpha} Z$ has the same
    distribution as $Z$. { Each circularly symmetric complex
      Gaussian vector can be obtained via a linear transformation of a standard
      complex Gaussian vector $Z \sim C\mathcal{N}_d(0,\Id_d)$ whose real and
      imaginary part are \emph{independent} real-valued standard Gaussian
      vectors. Some authors drop the independency assumption and thus obtain
      more general complex Gaussian vectors for which the matrix on the left
      hand side of~\eqref{eq:35}, sometimes called the relation matrix, is no
      longer zero. However, when we speak of a complex Gaussian vector, we
      always mean the circularly symmetric case of Definition~\ref{def:2}.}  
  \item The characteristic function $\rho$ of $Z$ is given by
    \begin{equation*}
     \rho(\zeta) 
     =
     \exp \left( \mi \mathfrak{Re}( \left\langle \mu,\zeta \right\rangle_{\C^d}) -
       \frac{1}{4} \left\langle \Sigma \zeta,\zeta \right\rangle_{\C^d} \right),
   \end{equation*}
   which shows that $Z$ is determined by its moments, i.e. (assuming $\mu=0$ for
   notational convenience) any $d$-dimensional 
   complex random vector~$W$ satisfying
   \begin{equation*}
     \Ex{
       \prod_{j=1}^d (W_j-\mu_j)^{p_j} (\overline{W}_j-\overline{\mu}_j)^{q_j} 
     }
     =
     \Ex{
       \prod_{j=1}^d (Z_j-\mu_j)^{p_j} (\overline{Z}_j-\overline{\mu}_j)^{q_j} 
     }
   \end{equation*}
  for all $p_j,q_j \in \N_0$, $j=1,\dots,d$, has the same law as $Z$.
  \end{enumerate}
\end{remark}

We will need the following complex version of the Gaussian integration by parts
formula, which for convenience will be stated for the centered case. 

\begin{lemma}
  \label{lem:4}
  Let $Z \sim CN_d(0,\Sigma)$ and $\varphi \colon \mathbb{C}^d \to \mathbb{C}$
  such that, {for $1 \leq i \leq d$, the Wirtinger derivatives}
  $\partial_{z_i} \varphi$ and $\partial_{\overline{z}_i} \varphi$ 
  { exist and} have at most polynomial growth in $z_i$ and
  $\overline{z}_i$ respectively. Then it holds that 
  \begin{equation}
    \label{eq:15}
    \Ex{Z_i \varphi(Z_1,\dots,Z_d)} = \sum_{j=1}^d \Ex{ Z_i \overline{Z}_j}
    \Ex{ \partial_{\overline{z}_j} \varphi(Z_1,\dots,Z_d)}
  \end{equation}
  and
  \begin{equation}
    \label{eq:16}
    \Ex{\overline{Z}_i \varphi(Z_1,\dots,Z_d)} = \sum_{j=1}^d \Ex{ Z_j
      \overline{Z}_i} 
    \Ex{ \partial_{z_j} \varphi(Z_1,\dots,Z_d)}
  \end{equation}  
\end{lemma}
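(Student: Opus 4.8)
The plan is to derive both identities by integrating by parts against the explicit Gaussian density~\eqref{eq:34}. As noted it suffices to treat the centered case $\mu = 0$, where
\[
  f(z) = \frac{1}{\pi^d \abs{\det \Sigma}} \exp\left( -\overline{z}^T \Sigma^{-1} z \right).
\]
Expanding $\overline{z}^T \Sigma^{-1} z = \sum_{l,k} \overline{z}_l (\Sigma^{-1})_{lk} z_k$ and differentiating while treating $z$ and $\overline{z}$ as independent variables (as justified in Section~\ref{s-sect-mall-oper}), one obtains the two families of first-order relations $\overline{\nabla} f = -\Sigma^{-1} z\, f$ and $(\nabla f)^T = -\overline{z}^T \Sigma^{-1} f$. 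Multiplying by $\Sigma$ (from the left, respectively the right) solves for the coordinate functions and gives the pointwise identities
\[
  z_i f = -\sum_{j=1}^d \Sigma_{ij}\, \partial_{\overline{z}_j} f
  \qquad\text{and}\qquad
  \overline{z}_i f = -\sum_{j=1}^d \Sigma_{ji}\, \partial_{z_j} f .
\]
Since $\Sigma_{ij} = \Ex{Z_i \overline{Z}_j}$ by Remark~\ref{rem-gauss} (recall $\mu = 0$), these are exactly~\eqref{eq:15} and~\eqref{eq:16} written at the level of the density.

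To pass from the density to the expectations I would multiply the first pointwise identity by $\varphi$ and integrate over $\C^d$, identified with $\R^{2d}$ through real and imaginary parts, obtaining
\[
  \Ex{Z_i \varphi(Z)} = \int z_i\, \varphi(z) f(z)\, \diff{z}
  = -\sum_{j=1}^d \Sigma_{ij} \int \varphi(z)\, \partial_{\overline{z}_j} f(z)\, \diff{z}.
\]
The crucial step is then the integration by parts
\[
  \int \varphi\, \partial_{\overline{z}_j} f\, \diff{z} = -\int \left( \partial_{\overline{z}_j} \varphi \right) f\, \diff{z},
\]
which follows from the identity $\partial_{\overline{z}_j} = \tfrac{1}{2}(\partial_{x_j} + \mi\, \partial_{y_j})$ by integrating by parts separately in the real variables $x_j$ and $y_j$. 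Substituting this back and recalling $\Sigma_{ij} = \Ex{Z_i \overline{Z}_j}$ yields~\eqref{eq:15}, and~\eqref{eq:16} follows in the same way from the second pointwise relation together with the analogous adjoint relation $\int \varphi\, \partial_{z_j} f\, \diff{z} = -\int (\partial_{z_j} \varphi) f\, \diff{z}$.

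The only genuine work, and the step I expect to be the main (if mild) obstacle, is the justification of these integrations by parts and of the interchange of summation and integration. Here the hypotheses do exactly what is needed: positive definiteness of $\Sigma$ makes $f$ together with all its Wirtinger derivatives decay faster than any polynomial, while the assumption that $\partial_{z_i}\varphi$ and $\partial_{\overline{z}_i}\varphi$ have at most polynomial growth forces $\varphi$ itself to grow at most polynomially. Consequently each integrand above is absolutely integrable and the boundary terms at infinity in the one-dimensional integrations by parts vanish, so that a routine dominated-convergence and Fubini argument makes every manipulation rigorous. Note that no holomorphy and no structure of $\Sigma$ beyond Hermitian positive definiteness enters, the Hermitian symmetry being used only through the probabilistic reading $\Sigma_{ij} = \Ex{Z_i \overline{Z}_j}$ of its entries.
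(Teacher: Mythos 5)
Your proof is correct, but it takes a somewhat different route from the paper's. The paper first writes $\Sigma=A^{\ast}A$ for a normal square root $A$, changes variables $\xi=A^{-1}z$ to reduce to the standard complex Gaussian density $\me^{-\zeta^{\ast}\zeta}$, performs the Wirtinger integration by parts there, and then undoes the transformation via the chain rule, recovering $\Sigma_{i,k}=\sum_j A_{i,j}\overline{A}_{k,j}$ at the end. You instead integrate by parts directly against the general density, using the pointwise relations $z_i f = -\sum_j \Sigma_{ij}\,\partial_{\overline{z}_j}f$ and $\overline{z}_i f = -\sum_j \Sigma_{ji}\,\partial_{z_j}f$ obtained by differentiating the quadratic form $\overline{z}^{T}\Sigma^{-1}z$ and multiplying back by $\Sigma$. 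Your route avoids the change of variables and the chain-rule bookkeeping with $A$, at the cost of manipulating $\Sigma^{-1}$ explicitly; the paper's route isolates all the analysis (Fubini, vanishing boundary terms) in the standard-Gaussian case, where it is cleanest, and also showcases the chain rule for Wirtinger derivatives, which is part of the expository purpose of including the proof. A further minor difference: the paper obtains~\eqref{eq:16} from~\eqref{eq:15} by conjugation (using $\overline{\partial_{\overline{z}}g}=\partial_z\overline{g}$), whereas you rederive it symmetrically; both are fine. Your justification of the boundary terms and the interchange of sum and integral via the Gaussian decay of $f$ and the polynomial growth of $\varphi$ and its derivatives matches what the paper's ``Fubini argument'' implicitly relies on.
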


\begin{proof}
  The proof is standard and straightforward but is included nevertheless to
  { demonstrate the use of Wirtinger calculus}. We will 
  prove formula~\eqref{eq:15} and note that~\eqref{eq:16} then follows by conjugation.  
  As $\Sigma$ is positive definite Hermitian, it admits a normal square root
  $A$ such that $\Sigma=A^{\ast}A$, where $A^{\ast}$ denotes the conjugate
  transpose of $A$. Clearly, both $A$ and $A^{\ast}$ are
  invertible and we have that $\Sigma^{-1} = (A^{\ast})^{-1} A^{-1}$. The linear
  transformation 
  \begin{equation*}
    \xi = A^{-1} z
  \end{equation*}
  induces a linear transformation on $\R^{2d}$ whose constant volume element
  will be 
  denoted by $v$. Writing $z=x+iy$ and $\zeta=\xi+\mi \upsilon$ (with
  $z_i=x_i+\mi y_i$ and $\zeta_i=\xi_i + \mi \upsilon_i$), we get that 
  \begin{align*}
    \Ex{Z_i \varphi(Z)}
    &=
      \frac{1}{\pi^d \abs{\on{det}\Sigma}}
      \int_{\R^{2d}}^{}
      z_i \varphi(z)
      \,
      \me^{- z^{\ast} \Sigma^{-1} z}
      \diff{(x,y)}
    \\ &=
         \frac{v}{\pi^d \abs{\on{det}\Sigma}}
         \sum_{j=1}^d A_{ij}
         \int_{\R^{2d}}^{}
          \zeta_j \varphi(A \zeta)
      \,
      \me^{- \zeta^{\ast} \zeta}
         \diff{(\xi,\upsilon)}
    \\ &= 
         \frac{- v}{\pi^d \abs{\on{det}\Sigma}}
         \sum_{j=1}^d A_{ij}
         \int_{\R^{2d}}^{}
         \varphi(A \zeta)
         \left(
         \partial_{\overline{\zeta}_j} \me^{- \zeta^{\ast} \zeta} \right)
         \diff{(\xi,\upsilon)}.
  \end{align*}
  By the product rule, it holds that
  \begin{equation*}
    \varphi(A \zeta)
         \left(
           \partial_{\overline{\zeta}_j} \me^{- \zeta^{\ast} \zeta} \right)
         =
        \partial_{\overline{\zeta}_j}  \left( \varphi(A \zeta)
          \me^{- \zeta^{\ast} \zeta} \right)
        -
        \left( \partial_{\overline{\zeta}_j}     \varphi(A \zeta)
        \right)
         \me^{- \zeta^{\ast} \zeta}.
 \end{equation*}
 Now, by a Fubini argument, 
 \begin{equation*}
   \int_{\R^{2d}}^{}  \partial_{\overline{\zeta}_j}  \left( \varphi(A \zeta)
          \me^{- \zeta^{\ast} \zeta} \right) \diff{(\xi,\upsilon)} = 0.
 \end{equation*}
 Furthermore, by the chain rule,
 \begin{equation*}
   \partial_{\overline{\zeta}_j}     \varphi(A \zeta)
   =
   \sum_{k=1}^d
   \overline{A}_{k,j}
   (\partial_{\overline{\zeta}_k} \varphi) (A \zeta).
 \end{equation*}
 Therefore,
 \begin{align*}
   \Ex{Z_i \varphi(Z)}
   &=
   \frac{v}{\pi^d \abs{\on{det}\Sigma}}
         \sum_{j,k=1}^d A_{ij} \overline{A}_{k,j}
         \int_{\R^{2d}}^{}
         (\partial_{\overline{\zeta}_k} \varphi)
         (A \zeta)
         \me^{- \zeta^{\ast} \zeta}
     \diff{(\xi,\upsilon)}
   \\ &=
   \frac{1}{\pi^d \abs{\on{det}\Sigma}}
         \sum_{j,k=1}^d A_{ij} \overline{A}_{k,j}
         \int_{\R^{2d}}^{}
         \partial_{\overline{z}_k} \varphi
         (z)
         \me^{- z^{\ast} z}
        \diff{(x,y)}.
 \end{align*}
 Noting that $\sum_{j=1}^d A_{i,j} \overline{A}_{k,j} = \sum_{j=1}^d A_{i,j}
 A^{\ast}_{j,k} = \Sigma_{i,k} = \Ex{Z_i \overline{Z}_k}$
 proves~\eqref{eq:15}.
\end{proof}

As an immediate consequence of Lemma~\ref{lem:4}, we see that for $Z\sim
C\mathcal{N}_d(0,\Sigma)$ and all multi-indices $p=(p_1,\dots,p_d) \in \N_0^d$ of order 
at least one it holds that
\begin{equation*}
  \Ex{ \prod_{j=1}^d Z_j^{p_j}} = \Ex{ \prod_{j=1}^d \overline{Z}_j^{p_j}} = 0.
\end{equation*}
Furthermore, for the case $Z \sim C\mathcal{N}_1(0,\sigma^2)$, Lemma~\ref{lem:4} 
yields the well-known moment-formula
\begin{equation*}
  \begin{cases}
    p! \, \sigma^{2p} &\qquad \text{if $p=q$} \\
    0  &\qquad \text{if $p \neq q$},
  \end{cases}
\end{equation*}
valid for $p,q \in \N_0$ (with the usual convention that $0!=1$).

\subsection{Stein's method for the complex normal distribution} 
\label{s-sect-mall-oper-1}

For a quadratic matrix $A$, the Hilbert-Schmidt norm $\norm{A}_{\text{HS}}$ is
defined via the inner product 
$\left\langle A,B \right\rangle_{\text{HS}} = \on{tr}(A \overline{B}^T)$.

The next lemma is an adaptation of the Stein characterization for the
multivariate real normal distribution (see~\cite[Lemma
2.1]{chatterjee_multivariate_2008}, \cite[Lemma 3.3 
]{nourdin_multivariate_2010} and also~\cite[Lemma
2.6]{reinert_multivariate_2009}) to the complex case. Recall the definitions of
complex gradients and Hessians given in 
Section~\ref{s-sect-mall-oper}. 

\begin{lemma}[Stein's Lemma for the complex Gaussian distribution]
  \label{lem:3}
  For $d\geq 1$, let $\Sigma$ be a positive definite, Hermitian matrix and $Z
  \sim  C\mathcal{N}_d(0,\Sigma)$.  
  \begin{enumerate}[i)]
  \item A $d$-dimensional complex random vector $Y$ has the complex normal
    distribution $C\mathcal{N}_d(0,\Sigma)$, if, and only if, the identity
    \begin{multline*}
            \Ex{ \left\langle \overline{\nabla} \nabla f(Y), \overline{\Sigma}
        \right\rangle_{\text{HS}}}
      +
      \Ex{ \left\langle \nabla \overline{\nabla} f(Y), \Sigma
        \right\rangle_{\text{HS}}}
      \\ -
      \Ex{
        \left\langle \nabla f(Y), \overline{Y} \right\rangle_{\C^d}}
      -
       \Ex{
         \left\langle \overline{\nabla} f(Y), Y \right\rangle_{\C^d}}
       = 0
     \end{multline*}
     holds for any $f \in \mathcal{C}^2(\C^d)$ which satisfies
    \begin{multline*}
            \Ex{\abs{\left\langle \overline{\nabla} \nabla f(Y),
                  \overline{\Sigma} 
        \right\rangle_{\text{HS}}}}
      +
      \Ex{ \abs{\left\langle \nabla \overline{\nabla} f(Y), \Sigma
          \right\rangle_{\text{HS}}}}
      \\ +
      \Ex{
        \abs{
        \left\langle \nabla f(Y), \overline{Y} \right\rangle_{\C^d}}}
      +
       \Ex{\abs{
         \left\langle \overline{\nabla} f(Y), Y \right\rangle_{\C^d}}}
       < \infty.
     \end{multline*}
  \item Given $h \in \mathcal{C}^2(\C^d)$ with bounded derivatives up to
    order two, the function
    \begin{equation}
      \label{eq:4}
      U_h(z) = \int_0^1 \frac{1}{2t}
      \left(
        \Ex{ h(Z_{z,t})}
        -
        \Ex{h(Z)}
      \right)
      \diff{t},
    \end{equation}
    where $Z_{z,t}= \sqrt{t}z + \sqrt{1-t}Z$, is a solution to the complex Stein
    equation 
    \begin{multline}
      \label{eq:22}
      \left\langle \overline{\nabla} \nabla f(z), \overline{\Sigma}
      \right\rangle_{\text{HS}}
      +
      \left\langle \nabla \overline{\nabla} f(z), \Sigma
      \right\rangle_{\text{HS}}
      \\ -
      \left\langle \nabla f(z), \overline{z} \right\rangle_{\C^d}
      -
      \left\langle \overline{\nabla} f(z), z
      \right\rangle_{\C^d} 
      =
      h(z) - \Ex{ h(Z)}.
    \end{multline}
  \end{enumerate}
\end{lemma}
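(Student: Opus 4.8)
The statement has two parts, and it is convenient to abbreviate the left-hand side of the Stein equation~\eqref{eq:22} by writing $\mathcal{A}f(z)$ for the second-order operator
\[
\mathcal{A}f = \langle\overline{\nabla}\nabla f,\overline{\Sigma}\rangle_{\mathrm{HS}} + \langle\nabla\overline{\nabla}f,\Sigma\rangle_{\mathrm{HS}} - \langle\nabla f,\overline{z}\rangle_{\C^d} - \langle\overline{\nabla}f,z\rangle_{\C^d},
\]
so that part~i) asserts that $\Ex{\mathcal{A}f(Y)}=0$ for all admissible $f$ characterises $C\mathcal{N}_d(0,\Sigma)$, while part~ii) asserts $\mathcal{A}U_h = h-\Ex{h(Z)}$. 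I would prove part~ii) first, since the ``if'' direction of part~i) feeds on it and since the operator identity is the analytic core of the lemma.

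For part~ii), the plan is to exploit that the interpolation $Z_{z,t}=\sqrt{t}\,z+\sqrt{1-t}\,Z$ degenerates to $Z$ at $t=0$ and to the deterministic point $z$ at $t=1$. Writing $u(z,t)=\Ex{h(Z_{z,t})}$, so that $U_h(z)=\int_0^1\frac{1}{2t}\bigl(u(z,t)-u(z,0)\bigr)\,\diff{t}$, the key step is the parabolic identity $\mathcal{A}_z\,u(z,t) = 2t\,\partial_t u(z,t)$, where $\mathcal{A}_z$ acts in the $z$-variable. To obtain it, I would differentiate $u$ in $t$ by the Wirtinger chain rule; this produces first-order terms carrying the factor $z$ (with weight $\tfrac{1}{2\sqrt{t}}$) and terms carrying the Gaussian factor $Z$ (with weight $\tfrac{1}{2\sqrt{1-t}}$). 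The former are exactly the first-order part of $\mathcal{A}_z u$, once one notes $\partial_{z_k}u=\sqrt{t}\,\Ex{(\partial_{z_k}h)(Z_{z,t})}$, while the latter I would rewrite using the complex Gaussian integration by parts of Lemma~\ref{lem:4}: applied to $w\mapsto(\partial_{z_k}h)(\sqrt{t}\,z+\sqrt{1-t}\,w)$, the formulas~\eqref{eq:15} and~\eqref{eq:16} trade each factor $Z_k$ (resp.\ $\overline{Z}_k$) for a second Wirtinger derivative contracted against $\Sigma$, reproducing precisely the two Hilbert--Schmidt terms of $\mathcal{A}_z u$ (here one must track the Hermitian symmetry $\overline{\Sigma}^{T}=\Sigma$ so that the conjugates and transposes in the $\langle\cdot,\cdot\rangle_{\mathrm{HS}}$ pairing line up correctly in every index). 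Granting the identity, applying $\mathcal{A}$ under the integral sign and using $\mathcal{A}_z u(z,0)=0$ gives
\[
\mathcal{A}U_h(z)=\int_0^1\frac{1}{2t}\,\mathcal{A}_z u(z,t)\,\diff{t}=\int_0^1\partial_t u(z,t)\,\diff{t}=u(z,1)-u(z,0)=h(z)-\Ex{h(Z)},
\]
as desired.

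For part~i), the necessity (``only if'') is a direct application of Lemma~\ref{lem:4}: for $Y\sim C\mathcal{N}_d(0,\Sigma)$ and admissible $f$, I would apply~\eqref{eq:15} to $\Ex{\langle\nabla f(Y),\overline{Y}\rangle_{\C^d}}=\sum_i\Ex{Y_i(\partial_{z_i}f)(Y)}$ and~\eqref{eq:16} to $\Ex{\langle\overline{\nabla} f(Y),Y\rangle_{\C^d}}$, turning both first-order terms into Hessian terms contracted against $\Sigma$; these cancel the two Hilbert--Schmidt terms and leave $\Ex{\mathcal{A}f(Y)}=0$. For sufficiency (``if''), I would feed the solution operator of part~ii) back in: given $h\in\mathcal{C}^2(\C^d)$ with bounded derivatives up to order two, the function $U_h$ is itself admissible, its derivatives being controlled through the representation~\eqref{eq:4}, so that the hypothesis applied to $f=U_h$ together with $\mathcal{A}U_h=h-\Ex{h(Z)}$ yields $\Ex{h(Y)}=\Ex{h(Z)}$. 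Since this holds for a measure-determining class of test functions, $Y$ and $Z$ have the same law; the determinacy is exactly the content of Remark~\ref{rem-gauss}(ii).

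The main obstacle is the parabolic identity $\mathcal{A}_z u = 2t\,\partial_t u$ in part~ii): it requires computing $\partial_t u$ and the full complex Hessian of $u$ and then matching them through the integration-by-parts step while keeping the Wirtinger conjugation rules and the Hermitian structure of $\Sigma$ consistent throughout. Alongside this sit two routine but necessary analytic checks --- that the derivatives may be taken under the expectation and under the integral, which is guaranteed by the bounded derivatives of $h$, and that the factor $\tfrac{1}{2t}$ in~\eqref{eq:4} is harmless because $u(z,t)-u(z,0)=O(t)$ as $t\to0^{+}$, so that $U_h$ is well defined and $\mathcal{A}$ may legitimately be moved inside the integral.
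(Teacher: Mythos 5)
Your route is the one the paper itself gestures at: the paper's ``proof'' is only a pointer to the real-case arguments (direct Gaussian integration by parts, or the Barbour generator approach of \cite{nourdin_multivariate_2010}), and you have chosen to actually carry out the generator approach. The architecture is right --- part (i) necessity via Lemma~\ref{lem:4}, sufficiency via part (ii) together with a distribution-determining class of test functions, part (ii) via a commutation identity between $\mathcal{A}$ and the interpolation semigroup --- and your attention to the domain/integrability checks and to the $O(t)$ behaviour near $t=0$ is appropriate.

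The concrete problem is the sign of your key identity. Executing the differentiation you describe gives $\mathcal{A}_z u(z,t) = -2t\,\partial_t u(z,t)$, not $+2t\,\partial_t u(z,t)$. This is already visible in the real one-dimensional model case $\mathcal{A}f = f''-xf'$, $u(x,t)=\Ex{h(\sqrt{t}x+\sqrt{1-t}N)}$: one finds
\begin{equation*}
  \partial_t u \;=\; \frac{x}{2\sqrt{t}}\,\Ex{h'(X_t)} \;-\; \frac{1}{2\sqrt{1-t}}\,\Ex{N h'(X_t)}
  \;=\; \frac{1}{2t}\left( x\,\partial_x u - \partial_{xx} u \right)
  \;=\; -\frac{1}{2t}\,\mathcal{A}u,
\end{equation*}
and the complex computation is identical term by term: the $z$-terms produce $+\tfrac{1}{2t}\bigl(\langle\nabla u,\overline{z}\rangle_{\C^d}+\langle\overline{\nabla}u,z\rangle_{\C^d}\bigr)$ while the integrated-by-parts $Z$-terms produce $-\tfrac{1}{2t}$ times the two Hilbert--Schmidt terms. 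Consequently $\mathcal{A}U_h = -\int_0^1 \partial_t u\,\diff{t} = \Ex{h(Z)}-h(z)$, i.e.\ the solution of~\eqref{eq:22} as written is $-U_h$, not $U_h$. (Equivalently: after $t=\me^{-2s}$ one has $U_h=\int_0^{\infty}\bigl(P_sh-\Ex{h(Z)}\bigr)\diff{s}$ with $P_s$ the Ornstein--Uhlenbeck semigroup, and $L\int_0^\infty P_sh\,\diff{s}=P_\infty h-P_0h=\Ex{h(Z)}-h$ for the generator $L=\mathcal{A}$; the real reference puts $-L$ on the left of its Stein equation, which is why the same $U_h$ works there.) So your identity, as stated, would not survive your own computation; you must either negate $U_h$ or the left-hand side of~\eqref{eq:22}, and you should say so explicitly, since the slip is inherited from the statement itself (it is harmless downstream, where only the Hessian norms of Lemma~\ref{lem:5} and absolute values enter). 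One further caution that you rightly anticipate: with the conventions as literally written, the integration by parts of the first-order terms produces $\sum_{j,k}\Sigma_{kj}\,\partial_{\overline{z}_jz_k}f$ whereas the Hilbert--Schmidt pairings give $\sum_{j,k}\Sigma_{jk}\,\partial_{\overline{z}_jz_k}f$; for non-real Hermitian $\Sigma$ these differ, so a transpose has to be fixed somewhere in the pairing or in the Hessian indexing --- this is not purely mechanical and deserves a line in the write-up. Finally, for the sufficiency in part (i) you invoke moment-determinacy via Remark~\ref{rem-gauss}(ii), but the class you actually test against consists of $\mathcal{C}^2$ functions with bounded derivatives; it is cleaner to note directly that this class is measure-determining (it contains, e.g., smoothed indicators or the functions $z\mapsto\me^{\mi\mathfrak{Re}\langle\zeta,z\rangle}$).
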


\begin{proof}
  The real counterpart of this lemma has been proven by direct calculations
  using Gaussian integration by parts in~\cite[Lemma
  2.1]{chatterjee_multivariate_2008}, and based on the    generator approach
  of~\cite{barbour_steins_1990} in~\cite[Lemma
  3.3]{nourdin_multivariate_2010}. Both of these proofs can be straightforwardly  
  adapted to the complex case (either using complex Gaussian by parts or the
  complex Ornstein-Uhlenbeck generator) which is why we omit the details.
\end{proof}

We also need the following technical lemma, which can be deduced by adapting the
proof of inequality (3.4) in~\cite{nourdin_multivariate_2010}.  

\begin{lemma}
  \label{lem:5}
  In the setting and with the notation of Lemma~\ref{lem:3}, let $c(\Sigma) =
  \norm{\Sigma^{-1}}_{\text{op}} \norm{\Sigma}_{\text{op}}^{1/2}$. Then, for any
  $\alpha \in [0,1]$, the complex
  Hessians of the Stein solution~\eqref{eq:4} satisfy the bounds
  \begin{align*}
    \norm{\nabla \overline{\nabla} U_h(z)}_{\text{HS}}
    &\leq
      c(\Sigma)                                                
      \left( \alpha \max_{1 \leq k \leq d} \norm{\partial_{z_j}h}_{\infty}  +
      (1-\alpha) \max_{1 \leq k \leq d} \norm{\partial_{\overline{z}_j}
      h}_{\infty}  \right),
    \\
    \norm{\overline{\nabla} \nabla U_h(z)}_{\text{HS}}
    &\leq
    c(\Sigma)                                                
      \left( \alpha \max_{1 \leq k \leq d} \norm{\partial_{z_j}h}_{\infty}  +
      (1-\alpha) \max_{1 \leq k \leq d} \norm{\partial_{\overline{z}_j}
      h}_{\infty}  \right),
    \\
    \norm{\nabla \nabla U_h(z)}_{\text{HS}}
    &\leq
    c(\Sigma)                                                
      \max_{1 \leq k \leq d} \norm{\partial_{z_j}h}_{\infty}
    \\
    \intertext{and}
    \norm{\overline{\nabla}\overline{\nabla} U_h(z)}_{\text{HS}}
    &\leq
    c(\Sigma)                                                
      \max_{1 \leq k \leq d} \norm{\partial_{\overline{z}_j}h}_{\infty}.
  \end{align*}
\end{lemma}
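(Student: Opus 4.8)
The plan is to differentiate the integral representation~\eqref{eq:4} under the integral sign and, in each second-order derivative, to transfer exactly one of the two Wirtinger derivatives off $h$ and onto the Gaussian density. This trades the second derivatives of $h$ (which are \emph{not} controlled by hypothesis) for first derivatives, at the cost of a linear factor $\Sigma^{-1}Z$, and it is precisely what allows the $1/t$ singularity in~\eqref{eq:4} to cancel. Writing $Z_{z,t}=\sqrt t\,z+\sqrt{1-t}\,Z$, a single external derivative produces a factor $\sqrt t$ by the Wirtinger chain rule, e.g.\ $\partial_{z_k}\Ex{h(Z_{z,t})}=\sqrt t\,\Ex{(\partial_{z_k}h)(Z_{z,t})}$. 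For the \emph{second} derivative I would instead differentiate the explicit density of $Z_{z,t}$ --- a complex Gaussian with mean $\sqrt t\,z$ and covariance $(1-t)\Sigma$ --- since $\partial_{z_k}$ and $\partial_{\overline z_k}$ reproduce that density times a linear factor. After using $Z_{z,t}-\sqrt t\,z=\sqrt{1-t}\,Z$ this yields, for suitable $g$,
\begin{equation*}
  \partial_{z_k}\Ex{g(Z_{z,t})}=\frac{\sqrt t}{\sqrt{1-t}}\,\Ex{g(Z_{z,t})\,\overline{(\Sigma^{-1}Z)_k}},
  \qquad
  \partial_{\overline z_k}\Ex{g(Z_{z,t})}=\frac{\sqrt t}{\sqrt{1-t}}\,\Ex{g(Z_{z,t})\,(\Sigma^{-1}Z)_k}.
\end{equation*}
(Equivalently, one may invert the integration by parts formula of Lemma~\ref{lem:4}, solving the resulting linear system through $\Sigma^{-1}$.)

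Combining the two steps, each second-order Wirtinger derivative of $\Ex{h(Z_{z,t})}$ carries the prefactor $\sqrt t\cdot\tfrac{\sqrt t}{\sqrt{1-t}}=\tfrac{t}{\sqrt{1-t}}$, which cancels the $\tfrac1{2t}$ in~\eqref{eq:4} and leaves the integrable weight $\tfrac1{2\sqrt{1-t}}$, with $\int_0^1(2\sqrt{1-t})^{-1}\,\diff t=1$. For the two pure Hessians there is no choice of which derivative to transfer: writing $W=\Sigma^{-1}Z$, I obtain
\begin{equation*}
  \nabla\nabla U_h(z)=\int_0^1\frac1{2\sqrt{1-t}}\,\Ex{(\nabla h)(Z_{z,t})\,\overline{W}^{\,T}}\,\diff t,
\end{equation*}
so that only the holomorphic gradient $\nabla h$ enters, and analogously $\overline{\nabla}\,\overline{\nabla}U_h$ involves only $\overline{\nabla}h$. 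For the mixed Hessians $\nabla\overline{\nabla}U_h$ and $\overline{\nabla}\nabla U_h$ one derivative is holomorphic and the other anti-holomorphic, and \emph{either} may be the one transferred onto the density: taking the convex combination of the two choices with weights $\alpha$ and $1-\alpha$ produces exactly the interpolated right-hand side in the statement, with $\partial_{z_j}h$ appearing when the anti-holomorphic derivative is transferred and $\partial_{\overline z_j}h$ when the holomorphic one is.

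It then remains to bound the Hilbert--Schmidt norm of each matrix-valued integrand. For $\nabla\nabla U_h$ I would use the Hermitian symmetry of $\Sigma^{-1}$ to write $\overline{W}^{\,T}=\overline{Z}^{\,T}\Sigma^{-1}$, hence $\Ex{(\nabla h)(Z_{z,t})\,\overline{W}^{\,T}}=\Ex{(\nabla h)(Z_{z,t})\,\overline{Z}^{\,T}}\,\Sigma^{-1}$, and then apply submultiplicativity $\norm{AB}_{\text{HS}}\le\norm{A}_{\text{HS}}\norm{B}_{\text{op}}$ to peel off the factor $\norm{\Sigma^{-1}}_{\text{op}}$. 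The remaining expectation is controlled by Cauchy--Schwarz using $\abs{(\partial_{z_j}h)(\cdot)}\le\norm{\partial_{z_j}h}_{\infty}$ together with the covariance identity $\Ex{Z\,\overline{Z}^{\,T}}=\Sigma$ from Remark~\ref{rem-gauss}, which contributes the factor $\norm{\Sigma}_{\text{op}}^{1/2}$. Multiplying by $\int_0^1(2\sqrt{1-t})^{-1}\,\diff t=1$ then gives the constant $c(\Sigma)=\norm{\Sigma^{-1}}_{\text{op}}\norm{\Sigma}_{\text{op}}^{1/2}$, and the identical estimate applied term-by-term to the two pieces of the mixed Hessians yields the convex combination of $\max_j\norm{\partial_{z_j}h}_{\infty}$ and $\max_j\norm{\partial_{\overline z_j}h}_{\infty}$.

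The step I expect to require the most care is the rigorous justification of differentiating under the integral sign and of the density integration by parts, in particular the integrability of the integrand as $t\to0$ (where the $1/t$ singularity must genuinely be annihilated by the $t$ gained from the two derivatives) and as $t\to1$ (where $1/\sqrt{1-t}$ is only just integrable); this is exactly where the boundedness and polynomial-growth hypotheses on the derivatives of $h$ are consumed, as in the real argument of~\cite{nourdin_multivariate_2010}. The secondary point of care, specific to the complex adaptation, is the bookkeeping of conjugations --- tracking when $\Sigma^{-1}Z$ appears versus its conjugate, and matching this against $\overline{\Sigma^{-1}}=(\Sigma^{-1})^{T}$ --- which determines whether $\partial_z h$ or $\partial_{\overline z}h$ survives in each case and hence the correct placement of $\alpha$ and $1-\alpha$.
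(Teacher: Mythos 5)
Your argument is exactly the intended one: the paper gives no proof of Lemma~\ref{lem:5} beyond citing the adaptation of inequality (3.4) of \cite{nourdin_multivariate_2010}, and your sketch --- chain rule for one Wirtinger derivative, integration by parts against the Gaussian density of $Z_{z,t}$ for the other, cancellation of the $1/(2t)$ singularity leaving the weight $1/(2\sqrt{1-t})$, and the $\alpha$-interpolation for the two mixed Hessians --- is precisely that adaptation, with the conjugation bookkeeping handled correctly. The only step to tighten is the final Hilbert--Schmidt estimate: to get the dimension-free constant $c(\Sigma)\max_j\norm{\partial_{z_j}h}_{\infty}$ you should apply Cauchy--Schwarz by testing the matrix $\Ex{(\nabla h)(Z_{z,t})\,\overline{Z}^{T}}$ against a unit Hilbert--Schmidt matrix (as in the real proof), since a purely entrywise Cauchy--Schwarz yields $\on{tr}(\Sigma)$ and an extra factor of $d$.
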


\begin{remark}\hfill
  \begin{enumerate}[i)]
  \item {The proofs of Lemmas~\ref{lem:3} and~\ref{lem:5}}
    crucially 
      depend on the characterizing property that any complex Gaussian vector $Z
      \sim 
    C\mathcal{N}_d(0,\Sigma)$ can be obtained via a linear transformation of a
    standard complex Gaussian vector $\widetilde{Z}\sim
    C\mathcal{N}_d(0,\Id_d)$. An adaptation of these proofs to the larger class
    of not necessarily circularly symmetric complex Gaussian vectors hinted at
    {in Remark~\ref{rem-gauss}} is thus not possible.
  \item As $\Sigma$ and its inverse are positive definite and Hermitian, the
    operator norms of these two matrixes coincide with their spectral
    radii. Thus, the constant $c(\Sigma)=\norm{\Sigma^1}_{op}
    \norm{\Sigma}_{op}^{1/2}$ appearing in the bounds for $U_h$ in
    Lemma~\ref{lem:5} coincides  with 
    $\sqrt{\lambda_{max}}/\lambda_{min}$ where $\lambda_{max}$ and
    $\lambda_{min}$ denote the largest and smallest eigenvalues of $\Sigma$,
    respectively. In particular, in the case $d=1$, where $\Sigma=\lambda > 0$, this
    constant becomes $1/\sqrt{\lambda}$. 
  \end{enumerate}
\end{remark}

For the case $d=1$, it is actually possible to derive a simpler characterizing
differential equation.

\begin{lemma}
  \label{lem:1}
  A complex valued random variable $Z$ has the standard complex normal
  distribution if, and only if 
  \begin{equation}
    \label{eq:8}
    \Ex{\partial_z f(Z)} - \Ex{\overline{Z} f(Z)} = 0
  \end{equation}
  for all { Wirtinger differentiable functions $f \colon \C \to \C$
    such that $\partial_zf$ has at most polynomial growth.} 
\end{lemma}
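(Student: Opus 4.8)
The plan is to prove the two implications separately, deriving necessity from the integration-by-parts formula of Lemma~\ref{lem:4} and sufficiency from the moment-determinacy of the complex Gaussian recorded in Remark~\ref{rem-gauss}~(ii); throughout, ``standard'' means $Z \sim C\mathcal{N}_1(0,1)$, so that $\Ex{Z\overline{Z}}=1$. For the ``only if'' direction I would simply specialize~\eqref{eq:16} to $d=1$: it reads $\Ex{\overline{Z}f(Z)} = \Ex{Z\overline{Z}}\,\Ex{\partial_z f(Z)} = \Ex{\partial_z f(Z)}$, which is exactly~\eqref{eq:8}. The growth hypotheses match, since~\eqref{eq:16} is obtained from~\eqref{eq:15} by conjugation (apply~\eqref{eq:15} to $\overline{f}$ and use $\overline{\partial_{\overline{z}}\,\overline{f}} = \partial_z f$), so that only polynomial growth of $\partial_z f$ is required, precisely the assumption of the lemma.

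For the ``if'' direction I would assume~\eqref{eq:8} for all admissible $f$ and feed in the monomials $f(z)=z^p\overline{z}^q$ with $p,q \in \N_0$, for which $\partial_z f = p\,z^{p-1}\overline{z}^q$ has polynomial growth. Writing $m_{p,q}=\Ex{Z^p\overline{Z}^q}$, identity~\eqref{eq:8} becomes the recursion $m_{p,q+1}=p\,m_{p-1,q}$ (with the convention $m_{-1,q}:=0$). Combined with the normalization $m_{0,0}=1$ and the conjugation symmetry $m_{q,p}=\overline{m_{p,q}}$, this determines every mixed moment: one gets $m_{0,q}=0$ for $q\geq 1$, then $m_{p,q}=p\,m_{p-1,q-1}=\dots=p!\,m_{0,q-p}$ for $p\leq q$, whence $m_{p,p}=p!$ and $m_{p,q}=0$ for $p<q$, the case $p>q$ following by symmetry. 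These are exactly the moments of $C\mathcal{N}_1(0,1)$ computed right after Lemma~\ref{lem:4}, so by Remark~\ref{rem-gauss}~(ii) the law of $Z$ is the standard complex normal.

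The step requiring the most care, and the main obstacle, is integrability, since testing~\eqref{eq:8} against $z^p\overline{z}^q$ tacitly assumes the moments $m_{p,q}$ to be finite. I would remove this assumption by an induction on the total order $p+q$, replacing the monomials with truncated test functions $f_R(z)=z^p\overline{z}^q\,\chi_R(z)$, where $\chi_R$ is a smooth compactly supported cutoff equal to $1$ on $\{\abs{z}\leq R\}$ with derivatives of size $O(1/R)$. Each $f_R$ is bounded with bounded Wirtinger derivatives, hence admissible with all expectations finite; applying~\eqref{eq:8} to $f_R$ and letting $R\to\infty$, the extra cutoff-derivative terms are dominated by the lower-order moments already shown finite, and a monotone/dominated convergence argument both validates the recursion for the genuine monomials and yields finiteness of $m_{p,q+1}$. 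A cleaner but essentially equivalent alternative avoids high moments altogether: testing against the bounded functions $f_\zeta(z)=\me^{\mi\,\mathfrak{Re}(\overline{\zeta}z)}$, for which $\partial_z f_\zeta = \tfrac{\mi}{2}\overline{\zeta}f_\zeta$, turns~\eqref{eq:8} (together with its conjugate, obtained by applying~\eqref{eq:8} to $\overline{g}$ and using the conjugation identities) into the pair of first-order equations $\partial_\zeta\rho=-\tfrac14\overline{\zeta}\rho$ and $\partial_{\overline{\zeta}}\rho=-\tfrac14\zeta\rho$ for the characteristic function $\rho$, whose only solution with $\rho(0)=1$ is $\rho(\zeta)=\me^{-\abs{\zeta}^2/4}$, the characteristic function of $C\mathcal{N}_1(0,1)$. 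Here the integrability burden is merely shifted to justifying differentiation under the expectation, i.e.\ to $\Ex{\abs{Z}}<\infty$, which is the genuine difficulty in either route.
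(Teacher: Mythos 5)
Your proposal follows the same route as the paper: necessity via the Gaussian integration-by-parts formula of Lemma~\ref{lem:4}, and sufficiency by inserting the monomials $z^p\overline{z}^q$ to recover the complex Gaussian moment recursion and invoking moment-determinacy from Remark~\ref{rem-gauss}. Your additional discussion of the integrability issue (cutoffs, or the characteristic-function alternative) goes beyond the paper's two-line proof, which silently assumes finiteness of the moments, but the core argument is the same.
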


\begin{proof}
  Necessity of condition~\eqref{eq:8} is implied by Gaussian integration by parts
  {(Lemma~\ref{lem:4})}. {Sufficiency follows by}
  inserting the polynomials  $f(z)=\overline{z}^pz^q$, {which}
  immediately yields the moment recursion for the  standard complex
  Gaussian{, and noting that the complex Gaussian distribution is
    determined by its moments.}   
\end{proof}

\begin{remark}
From identity~\eqref{eq:8}, one is led to the ``Stein equation''
\begin{equation}
  \label{eq:9}
  \partial_z f(z) - \overline{z} f(z) = h(z) - \Ex{h(Z)},
\end{equation}
where $h  \colon \C \to \C$ is some given function and $Z$ has the standard
complex Gaussian distribution. Note that if we formally replace the complex
variable $z$ with a real variable $x$ {and the Wirtinger derivative
  $\partial_z$ with the partial derivative $\partial_x$}, we obtain the
classical Stein equation of 
the one-dimensional Gaussian distribution. However, as one sees after writing
$z=x+\mi y$ {and} separating real and imaginary parts, this equation
is not solvable 
in general. This was to be expected, as otherwise, using Stein's method for the
real case, one could obtain bounds in total variation and Kolmogorov distance
for two-dimensional real Gaussian approximation, which is not possible using
this approach (see for
example~\cite[pp.263]{chatterjee_multivariate_2008}. Lemma~\ref{lem:1} can thus 
not be quantified.
\end{remark}

\section{Complex Markov diffusion generators}
\label{s-as-real-case}

As in the real case, we start with a \emph{good measurable space} $E$ in the sense
of~\cite[p.7]{bakry_analysis_2014} (for example, take $E$ to be a Polish space),
equipped with a probability measure $\mu$. On $L^2(E,\R,\mu)$, let $\LL$ be a
symmetric Markov diffusion generator $\LL$ with discrete spectrum $S =
\Set{ - \lambda_{k}}$, where the eigenvalues $-\lambda_k$ are ordered by magnitude,
i.e. $0=\lambda_0 < \lambda_1 < \dots$. In the language of functional analysis,
$-\LL$ is a positive, self-adjoint linear operator vanishing on the
constants. The associated bilinear carr\'e du champ operator $\Gamma$, acting on
a set $\mathcal{A}_0$ which we assume to be dense in $L^p(E,\R,\mu)$ for all $p
\geq 1$, is defined in the usual way as
\begin{equation}
  \label{eq:18}
  2\Gamma(U_1,U_2) = \LL (U_1U_2)- U_{1} \LL U_2 - U_2 \LL U_1
\end{equation}
and for any
smooth function $\varphi \colon \R^d \to \R$ the diffusion property
\begin{multline}
  \label{eq:14}
  \LL \varphi (U) = \LL \varphi(U_1,\dots,U_d) \\ = \sum_{k=1}^d \partial_{x_k} 
  \varphi(U_1,\dots,U_d) \LL U_k + \sum_{j,k=1}^d \partial_{x_j x_k} \varphi(U_1,\dots,U_d)
  \Gamma(U_j,U_k) 
\end{multline}
holds. As is well known, this diffusion property is equivalent to the chain rule
\begin{equation}
  \label{eq:30}
  \Gamma(\varphi(U_1,\dots,U_d),V) = \sum_{j=1}^d \partial_j \varphi(U_1,\dots,U_d) \Gamma(U_k,V).
\end{equation}
Through straightfoward complexification, we can extend $\LL$ and $\Gamma$ to { act
on} the space~$L^2(E,\C,\mu)$. Writing $F=U+\mi V$ for a generic element of
$L^2(E,\C,\mu)$, this extension of $\LL$, which we denote for
the moment by $\widehat{\LL}$, is simply defined as
\begin{equation}
  \label{eq:20}
  \widehat{\LL} =
  \widehat{\LL}(U + \mi V) = \LL U + \mi \LL V.
\end{equation}
We immediately see that $-\widehat{\LL}$ remains a positive, self-adjoint
operator vanishing on constants and that its spectrum coincides with the one 
of $\LL$. Indeed, assuming that $U+\mi V$ is an eigenfunction of $\widehat{\LL}$
with some eigenvalue $\lambda$, we have that
\begin{equation*}
  \lambda (U + \mi V) = \widehat{\LL} (U + \mi V) = \LL U + \mi \LL V.
\end{equation*}
Comparing imaginary and real parts, this gives that $\lambda$ lies in the
spectrum of $\LL$ and that both, $U$ and $V$ are eigenfunctions of $\LL$ with eigenvalue $\lambda$.
Similarly, for $U_1,U_2,V_1,V_2 \in \mathcal{A}_0$, the extended carr\'e du
champ operator $\widehat{\Gamma}$, defined on $\widehat{\mathcal{A}}_0 =
\mathcal{A}_0 \oplus \mi \mathcal{A}_0$, is given by
\begin{equation}
  \label{eq:26}
  \widehat{\Gamma}(U_1+\mi V_1,U_2 + \mi V_2) = \Gamma(U_1,U_2) +
  \Gamma(V_1,V_2) + \mi \left( \Gamma(V_1,U_2) - \Gamma(U_1,V_2) \right).
\end{equation}
It follows that  $\widehat{\Gamma}$ is sesquilinear, positive ($\widehat{\Gamma}(F,F) \geq
0$) and Hermitian
($\widehat{\Gamma}(F,G)= \overline{\widehat{\Gamma}(G,F)}$). Furthermore, using 
identity~\eqref{eq:18}, we get that 
\begin{equation}
  \label{eq:19}
  2\widehat{\Gamma}(F,G) = \widehat{\LL}(F\overline{G}) - F \widehat{\LL}\overline{G} - \overline{G}
  \widehat{\LL} F,
\end{equation}
which yields the integration by parts formula
\begin{equation*}
  \int_E^{} \widehat{\Gamma}(F,G) \diff{\mu} = - \int_E^{} F \widehat{\LL} \,
  \overline{G} \diff{\mu}.
\end{equation*}
As $-\widehat{\LL}$ is
positive, it holds that $\overline{\widehat{\LL} F} = \widehat{\LL}
\overline{F}$. In particular, all eigenspaces of $\widehat{\LL}$ are closed under complex
conjugation and, if $\pi_j$ denotes the orthogonal projection onto
$\ker \left(\widehat{L}+ \lambda_k \Id \right)$, we have for all $F \in
L^2(E,\C,\mu)$ that $\overline{\pi_j(F)} = \pi_j(\overline{F})$. Furthermore, by
the defining equation~\eqref{eq:19} of $\Gamma$, we also see that
$\overline{\widehat{\Gamma}(F,G)}=\widehat{\Gamma}(\overline{F},\overline{G})$.
Using~\eqref{eq:20} and~\eqref{eq:26}, it is straightforward to verify the
diffusion property
\begin{align}
  \label{eq:28}
  \widehat{\LL} \varphi(F) &= \widehat{\LL} \varphi(F_1,\dots,F_d)
  \\ &= \notag
       \sum_{j=1}^d \left( \partial_{z_j} \varphi(F) \widehat{\LL} F_j
      + \partial_{\overline{z}_j} \varphi(F) \widehat{\LL} \overline{F}_j \right)
  \\ \notag &\qquad \qquad+ \sum_{j,k=1}^d \left( \partial_{z_j z_k} \varphi(F)
              \widehat{\Gamma}(F_j, 
    \overline{F}_k) + \partial_{\overline{z}_j \overline{z}_k} \varphi(F)
              \widehat{\Gamma}(\overline{F}_j,F_k) 
  \right)
  \\ \notag &\qquad \qquad+ \sum_{j,k=1}^d \left( \partial_{z_j \overline{z}_k}
              \varphi(F) 
    \widehat{\Gamma}(F_j,F_k) 
    + \partial_{\overline{z}_j z_k} \varphi(F)
              \widehat{\Gamma}(\overline{F}_j,\overline{F}_k) 
  \right),
\end{align}
and the chain rule
\begin{multline}
  \label{eq:29}
  \widehat{\Gamma}(\varphi(F),G) = \widehat{\Gamma}(\varphi(F_1,\dots,F_d))
  \\ = \sum_{j=1}^d \left( \partial_{z_j} \varphi(F) \widehat{\Gamma}(F_j,G)
    + \partial_{\overline{z}_j} \varphi(F) \widehat{\Gamma}(\overline{F}_j,G)
  \right), 
\end{multline}
both valid for smooth functions $\varphi \colon \C^d \to
\C$ and $F=(F_1,\dots,F_d) \in \widehat{\mathcal{A}}_0^d$. Here, 
$\partial_z$, $\partial_{\overline{z}}$, $\partial_{zz}$ etc. denote the
(iterated) Wirtinger derivatives introduced in
Section~\ref{s-sect-mall-oper}. To be more clear, 
the diffusion property~\eqref{eq:28} and the chain rule~\eqref{eq:29} can be
translated into versions of their real counterparts (i.e. identities~\eqref{eq:14}
and \eqref{eq:30}), by simply writing 
$\varphi(z_1,\dots,z_d)=u(x_1,\dots,x_d,y_1,\dots,y_d)+ \mi
v(x_1,\dots,x_d,y_1,\dots,y_d)$, where $z_j=x_j + \mi y_j$ and the functions $u$
and $v$ are real valued, decomposing the vector $F$ into real and imaginary
parts 
and writing the Wirtinger derivatives in terms of derivatives with respect to
the real variables $x_j$ and $y_j$. 
Because of this, we also see that, as in the real case, the diffusion property of
$\widehat{\LL}$ is equivalent to the chain rule of $\widehat{\Gamma}$.
Of course, using the fact that $\widehat{\Gamma}$ is Hermitian, we can
derive a chain rule for the second argument:
\begin{multline*}
  \widehat{\Gamma}(F,\varphi(G))
  = \widehat{\Gamma}(F,\varphi(G_1,\dots,G_d))
  \\ = \sum_{j=1}^d \left( \partial_{\overline{z}_j} \overline{\varphi}(G)
    \widehat{\Gamma}(F,G_j) 
    + \partial_{z_j} \overline{\varphi}(G) \widehat{\Gamma}(F,\overline{G}_j)
  \right). 
\end{multline*}

We are now ready to define a complex Markov diffusion generator.

\begin{definition}
  Given a good measurable space $(E,\mathcal{F})$, equipped with a probability
  measure $\mu$, a self-adjoint, linear operator $\widehat{\LL}$ acting on
  $L^2(E,\C,\mu)$ is called a \emph{complex symmetric Markov diffusion
    generator} with invariant measure $\mu$, if $-\LL$ is positive, $\LL 1 = 0$
  and the diffusion property~\eqref{eq:28} holds.
\end{definition}

Note that the construction above also works in reverse: Given a complex
symmetric Markov diffusion generator $\widehat{\LL}$, we obtain a corresponding
generator on $L^2(E,\R,\mu)$. From this abstract point of view, the real and 
complex approaches are thus completely equivalent. We state this as a short
proposition. 

\begin{proposition}
  \label{prop:1}
  $\widehat{\LL}$ is a complex Markov diffusion generator, if, and only if,
  there exists a real Markov diffusion generator   $\LL$ with (the same)
  spectrum such that $\widehat{\LL}F = \LL   \mathfrak{Re}(F) + \mi \LL
  \mathfrak{Im}(F)$ for all $F \in \on{dom}\widehat{\LL}$. 
\end{proposition}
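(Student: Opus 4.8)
The plan is to establish the two implications of the stated equivalence separately, with all of the real content sitting in the necessity direction, namely in the fact that a complex Markov diffusion generator must respect the real structure of $L^2(E,\C,\mu)$.

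For sufficiency (the ``if'' part), I would start from a real Markov diffusion generator $\LL$ with spectrum $S$ and the operator $\widehat{\LL}$ defined by~\eqref{eq:20}, and simply verify the defining properties. Positivity, self-adjointness and $\widehat{\LL}1=0$ are immediate from~\eqref{eq:20}; the coincidence of the spectra follows by comparing real and imaginary parts of the eigenvalue equation exactly as already carried out in this section; and the complex diffusion property~\eqref{eq:28} reduces to the real one~\eqref{eq:14} after writing $\varphi(z)=u(x,y)+\mi v(x,y)$, decomposing each $F_j$ into real and imaginary parts, and expressing the Wirtinger derivatives through the real partials. Since $\widehat{\LL}F=\LL\,\mathfrak{Re}(F)+\mi\,\LL\,\mathfrak{Im}(F)$ holds by construction, this direction is complete.

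For necessity (the ``only if'' part), let $\widehat{\LL}$ be a complex Markov diffusion generator. The key step is the reality property $\overline{\widehat{\LL}F}=\widehat{\LL}\overline{F}$, equivalently the commutation of $\widehat{\LL}$ with the conjugation $C\colon F\mapsto\overline{F}$. Granting this, $\widehat{\LL}$ maps real-valued functions to real-valued functions, since $\overline{\widehat{\LL}U}=\widehat{\LL}\overline{U}=\widehat{\LL}U$ whenever $U$ is real, and I would define $\LL$ to be the restriction $\widehat{\LL}\big|_{L^2(E,\R,\mu)}$. Self-adjointness, positivity and the vanishing on constants pass to this restriction, while the real diffusion property~\eqref{eq:14} is read off from~\eqref{eq:28} by specialising to real-valued $\varphi$ and real $F$, in which case $\nabla$ and $\overline{\nabla}$ collapse to the single real gradient and the four complex Hessians to the single real Hessian. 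Complex linearity then gives $\widehat{\LL}F=\widehat{\LL}\,\mathfrak{Re}(F)+\mi\,\widehat{\LL}\,\mathfrak{Im}(F)=\LL\,\mathfrak{Re}(F)+\mi\,\LL\,\mathfrak{Im}(F)$.

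To match the spectra, I would use that conjugation-invariance makes each eigenspace $\ker(\widehat{\LL}+\lambda_k\Id)$ invariant under $C$, hence equal to the complexification $H_k\oplus\mi H_k$ of the real eigenspace $H_k=\ker(\LL+\lambda_k\Id)$; since $\ker(\widehat{\LL}+\lambda_k\Id)\neq\{0\}$ forces $H_k\neq\{0\}$ and conversely, the two operators share the same spectrum. The step I expect to be the genuine obstacle is precisely the reality property in the necessity direction: this is the only non-formal ingredient, and it is where the Markov character of $\widehat{\LL}$ has to be used, since self-adjointness of an operator on $L^2(E,\C,\mu)$ does not by itself force commutation with $C$ --- an imaginary first-order (drift) contribution would still be self-adjoint and still satisfy the diffusion property. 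Making sure that the Markov structure excludes such a term is the heart of the argument.
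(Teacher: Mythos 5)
Your sufficiency direction is exactly what the paper does in the paragraphs preceding the proposition (complexify a real $\LL$ via~\eqref{eq:20}, read off positivity, self-adjointness and the common spectrum by separating real and imaginary parts of the eigenvalue equation, and translate~\eqref{eq:28} back into~\eqref{eq:14} through Wirtinger calculus), so that half is in order. The gap is in the necessity direction: you correctly single out the reality property $\overline{\widehat{\LL}F}=\widehat{\LL}\overline{F}$ as the one non-formal ingredient and then do not prove it, only asserting that ``the Markov structure'' must exclude an imaginary drift. This cannot be left as a remark, because the axioms actually listed in the paper's definition of a complex symmetric Markov diffusion generator (linearity, self-adjointness, positivity of $-\widehat{\LL}$, $\widehat{\LL}1=0$, and the diffusion property~\eqref{eq:28}) do \emph{not} imply it. On the circle with normalized Lebesgue measure, $\widehat{\LL}=\partial_{\theta}^2+\tfrac{1}{2}\mi\,\partial_{\theta}$ is self-adjoint, $-\widehat{\LL}$ has eigenvalues $n^2+\tfrac{n}{2}\geq 0$ with kernel the constants, the first-order term is a derivation and hence drops out of $\widehat{\Gamma}$ as defined by~\eqref{eq:19} and contributes only the harmless first-order terms to~\eqref{eq:28}, so the diffusion property holds; yet $\widehat{\LL}\cos\theta=-\cos\theta-\tfrac{1}{2}\mi\sin\theta$ is not real, so this operator does not come from any real generator via~\eqref{eq:20}. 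To close the argument you must bring in the Markov property of the semigroup $P_t=\me^{t\widehat{\LL}}$ itself --- a positivity-preserving, mass-conserving semigroup maps real functions to real functions, hence commutes with conjugation, and therefore so does its generator --- or else build the reality property into the definition. Either way, this step has to be written out; it is precisely the point where your proof stops.

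For comparison, the paper offers no proof at all: the proposition is presented as an immediate consequence of the remark that ``the construction above also works in reverse'', and the identity $\overline{\widehat{\LL}F}=\widehat{\LL}\overline{F}$ is asserted earlier in the section (there attributed to positivity of $-\widehat{\LL}$) only in the situation where $\widehat{\LL}$ is already the complexification of a real generator, where it is trivial. Your proposal is therefore more candid than the paper about where the content sits, and the rest of your necessity argument (restriction to $L^2(E,\R,\mu)$, recovery of~\eqref{eq:14} from~\eqref{eq:28} for real data, identification of the eigenspaces as $H_k\oplus\mi H_k$) is correct once the reality property is granted; but as submitted the ``only if'' implication is not established.
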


Note that $\widehat{\LL}$ and $\widehat{\Gamma}$ coincide with $\LL$ and
$\Gamma$ when restricted to real valued arguments. Because of this and
Proposition~\ref{prop:1} we will from now on no longer notationally distinguish
$\LL$ and $\Gamma$ from their complexified versions $\widehat{\LL}$ and
$\widehat{\Gamma}$ and instead denote both versions by $\LL$ and $\Gamma$,
respectively. 

As already mentioned in the introduction, it should be noted that although
everything is equivalent from an abstract point of view, the complex case
often introduces  features absent from the real case when turning to
special cases such as the complex Ornstein-Uhlenbeck generator (see the next
example). Furthermore, in many applications it is often much more natural
to use complex Gamma-calculus, taking the direct route through the complex 
domain instead of making a detour through $\R^2$.    

\begin{example}[The complex Ornstein-Uhlenbeck generator]
  \label{ex:1}
  {
  Starting from a standard isonormal Gaussian process framework (see for
  example~\cite[Ch. 1]{nualart_malliavin_2006}
  or~\cite[Ch. 2]{nourdin_normal_2012}) for the real-valued,
  infinite-dimensional Ornstein-Uhlenbeck generator, we obtain the complex
  Ornstein-Uhlenbeck generator $\LL_{\text{OU}}$ by carrying out the
  construction outlined above (i.e. via
  Proposition~\ref{prop:1}). Equivalently, one can directly start from an
  isonormal complex Gaussian process (see~\cite{chen_fourth_2014}). 
  The generator $\LL_{\text{OU}}$ can then be decomposed in the form
  $\LL_{\text{OU}}= - \delta D$, where $\delta$ and $D$ are the complex
  Malliavin divergence and Malliavin derivative operators
  (see~\cite[Ch. 15]{janson_gaussian_1997} for more details on 
  complex Malliavin calculus). The carr\'e du champ operator
  $\Gamma_{\text{OU}}$ takes the form 
  $\Gamma_{\text{OU}}= \left\langle DF,DG \right\rangle_H$, where the inner
  product is now taken in a complex Hilbert space $H$ (coming from the
  underlying complex isonormal Gaussian process) and $D$ denotes the complex
  Malliavin derivative. The spectrum of $\LL_{\text{OU}}$ is $-\N_0$ and by our
  findings 
  above we know that all eigenfunctions $F_\lambda \in \ker \left(
    \LL_{\text{OU}}+ \lambda \Id \right)$, $\lambda \in \N_0$, are of the form
  $F_\lambda=U_\lambda + 
  \mi V_{\lambda}$, where $U_\lambda$ and $V_\lambda$ are eigenfunctions of the 
  real-valued Ornstein-Uhlenbeck generator and thus multiple Wiener-It\^o
  integrals. In contrast to the real case, however, one has a finer
  decomposition of the eigenspaces with a rich structure: For each $\lambda \in
  \N_0$, it holds that 
  \begin{equation*}
    \ker \left( \LL_{\text{OU}} + \lambda \Id \right) = \bigoplus_{\substack{p,q
      \in \N_0\\ p+q=\lambda}} \mathcal{H}_{p,q},
  \end{equation*}
  where the sum on the right is orthogonal and the spaces $\mathcal{H}_{p,q}$
  consist of complex Wiener-It\^o integrals of the form $I_{p,q}(f)$
  (see~\cite{ito_complex_1952}). Furthermore, one can show that
  $\overline{\mathcal{H}_{p,q}}=\mathcal{H}_{q,p}$ and that only the eigenspaces
  of even eigenvalues $\lambda=2p$ contain real-valued eigenfunctions, belonging
  to $\mathcal{H}_{p,p}$.
  Let us briefly outline the construction of an
  orthonormal basis for $\mathcal{H}_{p,q}$ (see again~\cite{ito_complex_1952}
  for details). For integers $p,q \geq 0$, the complex Hermite polynomials
  $H_{p,q}$ are given by 
  \begin{align*}
    H_{p,q}(z) &= (-1)^{p+q} \me^{\abs{z}^2} \left( \partial_z \right)^p
                 \left( \partial_{\overline{z}} \right)^q \me^{- \abs{z}^2}
    \\ &=
         \sum_{j=1}^{p \land q} \binom{p}{j} \binom{q}{j} j! (-1)^j z^{p-j}
         \overline{z}^{q-j}, 
  \end{align*}
  where summation ends at the smaller of the two parameters $p$ and $q$. 
  Now let $\left\{ e_j \colon j \geq 1 \right\}$ be an orthonormal basis of the
  underlying complex Hilbert space $H$ and $\left\{ Z(h) \colon h \in H
  \right\}$ denote the complex isonormal Gaussian process.
  Furthermore, for $n \in \N_0$, denote by $M_{n}$ the set of all multi-indices
  of order $n$ (sequences with a finite number of positive non-zero entries
  which   sum up to $n$) and, for $(m_p,m_q) \in M_p \times M_q$, define 
  \begin{equation*}
   \Phi_{m_p,m_q} = \prod_{j=1}^{\infty} H_{m_p(j),m_q(j)}(Z(e_j)).
  \end{equation*}
  Then, the family $\left\{ \Phi_{m_p,m_q} \colon (m_p,m_q) \in M_p \times M_q
  \right\}$ is an orthonormal basis of $\mathcal{H}_{p,q}$.
  In particular, as $H_{p,0}=z^p$, we see that for any multi-index $m \in
  M_{p}$, the monomial $\prod_{j=1}^{\infty} Z(e_j)^{m(j)}$ is an element of
  $\mathcal{H}_{p,0}$ and remains an eigenfunction when taking powers. In other
  words, for these eigenfunctions the Wick product coincides with the ordinary
  product. In the real case, there exist no non-constant eigenfunctions with
  this property.}
\end{example}

\section{Fourth Moment Theorems for complex Gaussian approximation}
\label{s-now-let-z}

Throughout the whole section, $\LL$ denotes a complex symmetric Markov diffusion
generator with invariant measure $\mu$ and discrete spectrum $S = \Set{ -
  \lambda_k}$, acting on $L^2(E,\C,\mu)$, where $E$ is a good measurable
space. The associated carr\'e du champ operator, acting on $\mathcal{A}_0$ which
{we assume to be} dense in $L^p(E,\C,\mu)$ for all $p \geq 1$, is
denoted by $\Gamma$.  

We start by introducing the notion of chaos, which for the real case was given
in~\cite{azmoodeh_fourth_2014} and extended to the multidimensional case
in~\cite{campese_multivariate_2015}. These definitions { can be
  generalized as follows to the complex setting.}

\begin{definition}\hfill
  \label{def:1}
  \begin{enumerate}[(i)]
  \item  Two eigenfunctions $F_1 \in \ker \left(\LL +\lambda_{p_1} \on{Id}\right)$ and $F_2 \in
    \ker \left(\LL +\lambda_{p_2} \on{Id}\right)$ are called \emph{jointly
      chaotic}, if $F_1F_2$, $F_1 \overline{F}_2$ (and thus, as the eigenspaces
    are closed under conjugation, also $\overline{F_1F_2}$ and
    $\overline{F}_1F_2$) have an expansion over the first $p_1+p_2+1$
    eigenspaces. In formulas, we require that
  \begin{equation*}
    F_1 F_2 \in \bigoplus_{k=0}^{p_1+p_2} \ker \left(\LL +\lambda_{k} \on{Id}
    \right)
    \qquad \text{and} \qquad
    F_1 \overline{F}_2 \in \bigoplus_{k=0}^{p_1+p_2} \ker \left(\LL +\lambda_{k}
      \on{Id} \right).
    \end{equation*}
  \item  A single eigenfunction is called chaotic, if it is jointly chaotic
    with itself.
\item  A vector $F=(F_1,\dots,F_d)$ of eigenfunctions $F_j \in \ker \left(\LL +
    \lambda_{p_j} \on{Id}\right)$ is called \emph{chaotic}, if any two of its
  components are \emph{jointly chaotic} (in particular, each component is
  \emph{chaotic} in the sense of part (ii)). 
  \end{enumerate}
\end{definition}

{Note that indeed, by taking all involved eigenfunctions to be real
  valued, we obtain the corresponding notions of real Markov chaos 
  (namely~\cite[Def. 2.2]{azmoodeh_fourth_2014})   
  and~\cite[Def. 3.2]{campese_multivariate_2015}) as special cases of
  Definition~\ref{def:1}.} 
As in the real case, a crucial ingredient for our main results will be the
following general principle. The proof for the real case
(see~\cite[Thm. 2.1]{azmoodeh_fourth_2014}) can be straightforwardly generalized
to the complex case and is therefore omitted.

\begin{theorem}
  \label{thm:3}
  Let $F \in \bigoplus_{k=0}^p \on{ker} \left( \LL + \lambda_k \Id
  \right)$. Then, for any~$\eta \geq \lambda_p$ it holds that
  \begin{equation*}
    \int_E^{} F \left( \LL + \eta \Id \right)^2 \overline{F} \diff{\mu}
    \leq
    \eta
        \int_E^{} F \left( \LL + \eta \Id \right) \overline{F} \diff{\mu}
        \leq
        c
    \int_E^{} F \left( \LL + \eta \Id \right)^2 \overline{F} \diff{\mu},
  \end{equation*}
  where $1/c$ is the minimum of the set $\Set{ \eta - \lambda_k \mid 0 \leq k
    \leq p} \setminus \Set{ 0}$. 
\end{theorem}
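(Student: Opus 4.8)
The plan is to reduce the two operator inequalities to elementary inequalities for nonnegative numbers by diagonalizing $\LL + \eta\Id$ on the finite direct sum of eigenspaces in which $F$ lives. Since $F \in \bigoplus_{k=0}^p \ker(\LL + \lambda_k\Id)$, I would write $F = \sum_{k=0}^p F_k$ with $F_k = \pi_k(F)$ the orthogonal projection of $F$ onto $\ker(\LL + \lambda_k\Id)$, so that $\LL F_k = -\lambda_k F_k$ and hence $(\LL + \eta\Id)^m F_k = (\eta - \lambda_k)^m F_k$ for $m=1,2$. Because this is a finite sum, there are no domain issues and every expression below is well defined.

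Next I would rewrite each of the three integrals as a single weighted sum over the eigenvalues. Since all eigenspaces are closed under conjugation, we have $\overline{F} = \sum_k \overline{F_k}$ with $\overline{F_k} = \pi_k(\overline{F}) \in \ker(\LL + \lambda_k\Id)$, so that $(\LL+\eta\Id)^m\overline{F} = \sum_k (\eta-\lambda_k)^m \overline{F_k}$. Eigenspaces attached to the distinct eigenvalues $-\lambda_j \neq -\lambda_k$ are orthogonal in $L^2(E,\C,\mu)$, whence $\int_E F_j\overline{F_k}\diff{\mu} = 0$ for $j\neq k$. Setting $a_k = \int_E \abs{F_k}^2 \diff{\mu} = \norm{F_k}_{L^2}^2 \geq 0$ and $\mu_k = \eta-\lambda_k$, the cross terms collapse and I obtain $\int_E F(\LL+\eta\Id)^m\overline{F}\diff{\mu} = \sum_{k=0}^p \mu_k^m a_k$. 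The claim thus becomes the purely scalar statement $\sum_k \mu_k^2 a_k \leq \eta\sum_k \mu_k a_k \leq c\sum_k \mu_k^2 a_k$ with all $a_k\geq 0$.

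The two inequalities then follow term by term from the numerology of the weights. Since $\eta \geq \lambda_p \geq \lambda_k \geq 0$ for $0\leq k\leq p$, each $\mu_k$ satisfies $0\leq \mu_k\leq \eta$, so $\mu_k^2 \leq \eta\mu_k$; multiplying by $a_k\geq 0$ and summing gives the left inequality. For the right inequality I would invoke the definition of $c$: every nonzero weight obeys $\mu_k \geq 1/c$, equivalently $\mu_k \leq c\,\mu_k^2$ (and both sides vanish when $\mu_k = 0$), so summing $\mu_k a_k \leq c\,\mu_k^2 a_k$ against $a_k\geq 0$ upgrades the linear sum to the quadratic one and produces the remaining bound. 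The genuine content is entirely in the reduction step — verifying orthogonality of distinct eigenspaces, the conjugation-invariance $\overline{\pi_k F} = \pi_k\overline{F}$, and that $\LL+\eta\Id$ acts as the scalar $\mu_k$ on each component. After that the estimates are immediate, so I expect the only (modest) obstacle to be making this spectral reduction rigorous rather than any inequality manipulation.
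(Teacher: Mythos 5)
Your argument is correct and is exactly the spectral-decomposition proof that the paper itself omits, deferring instead to the real case (\cite[Thm.~2.1]{azmoodeh_fourth_2014}): decompose $F=\sum_{k=0}^p F_k$ along the (mutually orthogonal, conjugation-closed) eigenspaces, reduce each integral to $\sum_k \mu_k^m a_k$ with $\mu_k=\eta-\lambda_k\in[0,\eta]$ and $a_k=\norm{F_k}^2_{L^2}\geq 0$, and conclude termwise. One caveat: your final step establishes $\sum_k\mu_k a_k\leq c\sum_k\mu_k^2 a_k$, i.e.\ $\int_E F(\LL+\eta\Id)\overline{F}\diff{\mu}\leq c\int_E F(\LL+\eta\Id)^2\overline{F}\diff{\mu}$ \emph{without} the factor $\eta$ on the left, and this is in fact the correct form of the second inequality --- the chain as printed in the theorem, read literally as $\eta\int_E F(\LL+\eta\Id)\overline{F}\diff{\mu}\leq c\int_E F(\LL+\eta\Id)^2\overline{F}\diff{\mu}$, fails (take the Ornstein--Uhlenbeck spectrum $\lambda_k=k$, $F=F_1+F_2$ with $a_1=a_2=1$ and $\eta=2$, so that the middle term is $2$ while the right-hand side is $1$); so your proof is sound but you should not claim it yields the extra factor of $\eta$, which is a typo in the statement rather than something to be proved.
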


{Again, we note that by specializing to real valued eigenfunctions,
  we obtain~\cite[Thm. 2.1]{azmoodeh_fourth_2014} as a special case.} From
Theorem~\ref{thm:3}, we immediately deduce the following corollary. 

\begin{corollary}
  \label{cor:1}
  For two jointly chaotic eigenfunctions $F_{1} \in 
\ker \left( \LL + \lambda_{p_1} \Id \right)$ and $F_{2} \in
\ker \left( \LL + \lambda_{p_2} \Id \right)$ it holds that
\begin{equation}
  \label{eq:33}
  \int_E^{} \abs{\Gamma(F_1,F_2)}^2 \diff{\mu} \leq \frac{p_1+p_2}{2} \int_E^{}
  \overline{F}_1 F_2 \Gamma(F_1,F_2) \diff{\mu}.
\end{equation}
\end{corollary}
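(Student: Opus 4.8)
The plan is to reduce the statement to Theorem~\ref{thm:3} applied to the single function $P := F_1\overline{F}_2$, exploiting the sesquilinear product identity~\eqref{eq:19}. First I would record the algebraic identity playing the role of the real product rule. Starting from $2\Gamma(F_1,F_2) = \LL(F_1\overline{F}_2) - F_1\LL\overline{F}_2 - \overline{F}_2\LL F_1$ and using $\LL F_1 = -\lambda_{p_1}F_1$ together with $\LL\overline{F}_2 = \overline{\LL F_2} = -\lambda_{p_2}\overline{F}_2$ (recall $\overline{\LL F}=\LL\overline{F}$ and that the eigenvalues are real), I obtain
\[
  \Gamma(F_1,F_2) = \tfrac12\left(\LL + \eta\,\Id\right)P, \qquad \eta := \lambda_{p_1}+\lambda_{p_2}.
\]
Note that only the component $F_1\overline{F}_2$ from the definition of joint chaoticity is used here, so that $P \in \bigoplus_{k=0}^{p_1+p_2}\ker(\LL+\lambda_k\Id)$.

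Next I would turn the left-hand side into a quadratic form in $P$. Since $\LL$ commutes with conjugation and $\eta$ is real, $(\LL+\eta\Id)\overline{P} = \overline{(\LL+\eta\Id)P}$, whence $\abs{\Gamma(F_1,F_2)}^2 = \tfrac14\,\abs{(\LL+\eta\Id)P}^2$ pointwise. Integrating and using that $\LL+\eta\Id$ is self-adjoint on $L^2(E,\C,\mu)$ gives
\[
  \int_E \abs{\Gamma(F_1,F_2)}^2\diff{\mu} = \tfrac14\int_E P\,(\LL+\eta\Id)^2\overline{P}\diff{\mu}.
\]
Now Theorem~\ref{thm:3}, applied to $P$ with $p = p_1+p_2$ and this value of $\eta$, yields its first inequality $\int_E P(\LL+\eta\Id)^2\overline{P}\diff{\mu} \le \eta\int_E P(\LL+\eta\Id)\overline{P}\diff{\mu}$.

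It then remains to identify the middle quantity with the claimed right-hand side. Conjugating the product-rule identity gives $(\LL+\eta\Id)\overline{P} = 2\overline{\Gamma(F_1,F_2)}$, so that $\int_E P(\LL+\eta\Id)\overline{P}\diff{\mu} = 2\int_E F_1\overline{F}_2\,\overline{\Gamma(F_1,F_2)}\diff{\mu}$. Because $\LL+\eta\Id$ is self-adjoint, this integral is real, hence equal to its own conjugate $2\int_E\overline{F}_1 F_2\,\Gamma(F_1,F_2)\diff{\mu}$; combining the three displays produces the factor $\eta/2 = (\lambda_{p_1}+\lambda_{p_2})/2$ and the asserted bound.

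The step needing the most care is the eigenvalue bookkeeping underlying the application of Theorem~\ref{thm:3}: one must check that the chosen $\eta=\lambda_{p_1}+\lambda_{p_2}$ indeed dominates the top eigenvalue $\lambda_{p_1+p_2}$ of the decomposition furnished by joint chaoticity (an equality in the Ornstein--Uhlenbeck normalization $\lambda_k=k$, which is also where the stated factor $\tfrac{p_1+p_2}{2}$ comes from). The remaining subtlety is purely one of conjugation bookkeeping, namely making sure that the sesquilinearity of $\Gamma$ forces the correct companion $F_1\overline{F}_2$ rather than $F_1F_2$, and that every integral identity extracted from self-adjointness respects the complex conjugate built into the $L^2(E,\C,\mu)$ inner product.
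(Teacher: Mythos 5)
Your argument is correct and is essentially the paper's own proof: both identify $2\Gamma(F_1,F_2)=(\LL+\eta\,\Id)(F_1\overline{F}_2)$ with $\eta=\lambda_{p_1}+\lambda_{p_2}$ (which the paper writes as $p_1+p_2$, implicitly normalizing $\lambda_k=k$ as in the Ornstein--Uhlenbeck case), rewrite $\int_E\abs{\Gamma(F_1,F_2)}^2\diff{\mu}$ as $\tfrac14\int_E \overline{F}_1F_2(\LL+\eta\,\Id)^2(F_1\overline{F}_2)\diff{\mu}$ by self-adjointness, and conclude via the first inequality of Theorem~\ref{thm:3}. Your extra bookkeeping on conjugation and on the requirement $\eta\geq\lambda_{p_1+p_2}$ is sound and, if anything, more careful than the published proof.
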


\begin{proof}
  By definition, $2 \Gamma(F_1,F_2) = \left( \LL + (p_1+p_2)\Id
\right) (F_1\overline{F}_2)$ and thus, using the fact that $\LL$ and the
identity are both self-adjoint and then Theorem~\ref{thm:3}, it follows that
\begin{align*}
  \int_E^{} \abs{\Gamma(F_1,F_2)}^2 \diff{\mu}
  &= \frac{1}{4} \int_E^{} \overline{F}_1 F_2
  \left( \LL + 
      (p_1+p_2) \Id \right)^2 (F_1 \overline{F}_2) \diff{\mu}
  \\  &\leq
  \frac{p_1+p_2}{4} \int_E^{} \overline{F}_1F_2
  \left( \LL + 
     (p_1+p_2) \Id \right) (F_1 \overline{F}_2) \diff{\mu}
 \\  &=
  \frac{p_1+p_2}{2} \int_E^{} \overline{F}_1F_2
  \Gamma(F_1,F_2) \diff{\mu}.
\end{align*}
\end{proof}

Before continuing, we need to introduce some notation.
For $d\geq 1$, let $F=(F_1,\dots,F_d)$ and $G=(G_1,\dots,G_d)$ be two complex
random vectors. The Wasserstein distance $d_W(F,G)$ between $F$ and $G$ is then
defined as
\begin{equation}
  d_W(F,G) = \sup_{h \in \mathcal{H}} \abs{ \Ex{h(F)} - \Ex{h(G)}},
\end{equation}
where $\mathcal{H} = \Set{ h \colon \C^d \to \C \mid \norm{h}_{\text{Lip}} \leq
  1 }$ and $\norm{h}_{\text{Lip}}$ denotes the Lipschitz norm, defined as
\begin{equation*}
  \norm{h}_{Lip} = \sup_{w,z \in \C^d} \frac{\abs{h(w) - h(z)}}{\norm{w-z}_2} =
  \sup_{w,z \in \C^d} \frac{\abs{h(w) -h(z)}}{\sqrt{\sum_{j=1}^d \abs{w_j-z_j}^{2}}}.
\end{equation*}
Furthermore, we will use the shorthand $\Gamma(F,\LL^{-1}G)$ to denote the
matrix $\left( \Gamma(F_j,\LL^{-1}G_k) \right)_{1 \leq j,k \leq d}$. 

The following Theorem provides a
quantitative bound on the Wasserstein distance between a complex random vector
and a multivariate complex Gaussian.  

\begin{theorem}
  \label{thm:1}
  For $d\geq 1$, let $Z \sim C\mathcal{N}_d(0,\Sigma)$ and denote by
  $F=(F_1,\dots,F_d)$ a complex random vector whose components are elements of
  $\mathcal{A}_0$. Then it holds that
  \begin{multline}
    \label{eq:5}
    d_W(F,Z)
    \leq 2  \norm{\Sigma^{-1}}_{\text{op}} \norm{\Sigma}_{\text{op}}^{1/2}
     \left(
         \int_E^{}
         \norm{\Gamma(\overline{F},-\LL^{-1} F)}_{\text{HS}}^2 \diff{\mu}
         \right.
         \\ +\left. \int_E^{}
          \norm{\Gamma(F,-\LL^{-1} F) - \Sigma}_{\text{HS}}^2 \diff{\mu}
          \right)^{1/2}.
        \end{multline}
\end{theorem}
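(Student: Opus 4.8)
The plan is to run the multivariate Malliavin--Stein scheme in its complex form: combine the complex Stein equation of Lemma~\ref{lem:3} with integration by parts for the generator $\LL$, and then estimate the resulting second-order terms using the Hessian bounds of Lemma~\ref{lem:5}. First I would fix a test function $h \in \mathcal{H}$, i.e. with $\norm{h}_{\text{Lip}} \leq 1$. Since the Stein solution $U_h$ of~\eqref{eq:4} is only guaranteed to exist and to satisfy the bounds of Lemma~\ref{lem:5} for $h \in \mathcal{C}^2(\C^d)$, I would first reduce to smooth $h$ by a standard mollification argument: approximate $h$ by functions $h_\varepsilon \in \mathcal{C}^2(\C^d)$ with $\norm{h_\varepsilon}_{\text{Lip}} \leq 1$ converging to $h$, prove the estimate for each $h_\varepsilon$ with constants depending only on the Lipschitz norm, and pass to the limit. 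For such an $h$, evaluating the Stein equation~\eqref{eq:22} at $F$ and taking expectations gives
\begin{multline*}
  \Ex{h(F)} - \Ex{h(Z)}
  = \Ex{ \langle \overline{\nabla}\nabla U_h(F), \overline{\Sigma}\rangle_{\text{HS}}}
  + \Ex{ \langle \nabla\overline{\nabla} U_h(F), \Sigma\rangle_{\text{HS}}}
  \\
  - \Ex{\langle \nabla U_h(F), \overline{F}\rangle_{\C^d}}
  - \Ex{\langle \overline{\nabla} U_h(F), F\rangle_{\C^d}},
\end{multline*}
so it suffices to bound the right-hand side uniformly over $h$.

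Next I would rewrite the two first-order terms via the generator. Writing $F_j = -\LL(-\LL^{-1}F_j)$ (legitimate because the components of $F$ are centered elements of $\mathcal{A}_0$, on which $\LL^{-1}$ acts), the integration by parts formula $\int_E \Gamma(A,B)\diff{\mu} = -\int_E A\,\LL\overline{B}\diff{\mu}$ together with the chain rule~\eqref{eq:29} turns each term such as $\Ex{\partial_{z_j}U_h(F)\,F_j}$ into a sum of expectations of \emph{second} Wirtinger derivatives of $U_h$ evaluated at $F$, contracted against the entries of the matrices $\Gamma(F,-\LL^{-1}F)$ and $\Gamma(\overline{F},-\LL^{-1}F)$. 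The conceptual point that fixes the final shape of the bound is that the same integration by parts yields
\begin{equation*}
  \Ex{\Gamma(F_j,-\LL^{-1}F_k)} = \Ex{F_j\overline{F}_k}
  \qquad\text{and}\qquad
  \Ex{\Gamma(\overline{F}_j,-\LL^{-1}F_k)} = \overline{\Ex{F_jF_k}},
\end{equation*}
so the first matrix targets the covariance $\Sigma$ of $Z$, while the second targets the \emph{relation matrix} $\Ex{Z_jZ_k}$, which vanishes for a circularly symmetric complex Gaussian by~\eqref{eq:35}.

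I would then match these second-order terms against the two Hessian terms $\langle\overline{\nabla}\nabla U_h(F),\overline{\Sigma}\rangle_{\text{HS}}$ and $\langle\nabla\overline{\nabla}U_h(F),\Sigma\rangle_{\text{HS}}$. After collecting, the $\Sigma$-terms combine with the $\Gamma(F,-\LL^{-1}F)$-contributions to produce the difference $\Gamma(F,-\LL^{-1}F)-\Sigma$, while the $\Gamma(\overline{F},-\LL^{-1}F)$-contributions stand alone, being compared against the vanishing relation matrix. Thus $\Ex{h(F)}-\Ex{h(Z)}$ is expressed as a sum of expectations of the form $\Ex{\langle\nabla\overline{\nabla}U_h(F),\,\Gamma(F,-\LL^{-1}F)-\Sigma\rangle_{\text{HS}}}$ and $\Ex{\langle\overline{\nabla}\nabla U_h(F),\,\Gamma(\overline{F},-\LL^{-1}F)\rangle_{\text{HS}}}$, together with their conjugate counterparts.

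Finally I would estimate. Cauchy--Schwarz for the Hilbert--Schmidt pairing bounds each summand by a Hessian norm of $U_h$ times $\norm{\Gamma(F,-\LL^{-1}F)-\Sigma}_{\text{HS}}$ or $\norm{\Gamma(\overline{F},-\LL^{-1}F)}_{\text{HS}}$; the Hessian factors are controlled by Lemma~\ref{lem:5}, contributing the constant $c(\Sigma)=\norm{\Sigma^{-1}}_{\text{op}}\norm{\Sigma}_{\text{op}}^{1/2}$, while the first derivatives of $h$ appearing there are bounded by $\norm{h}_{\text{Lip}}\leq 1$. A further Cauchy--Schwarz over $E$ then replaces the $L^1(\mu)$ norms by the $L^2(\mu)$ norms in~\eqref{eq:5}, and keeping track of the two groups of terms produces the overall factor $2$. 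The main obstacle I anticipate is precisely the bookkeeping in the middle steps: with four kinds of complex Hessians present, the conjugation conventions in the chain rule~\eqref{eq:29}, the $\C^d$ inner products and the Hilbert--Schmidt pairing must be tracked exactly so that only the right entries of $\Gamma(F,-\LL^{-1}F)$ and $\Gamma(\overline{F},-\LL^{-1}F)$ survive and pair correctly with $\Sigma$ and $\overline{\Sigma}$; the justification of the integration by parts and of the reduction to smooth $h$ is routine given the density and regularity assumptions on $\mathcal{A}_0$.
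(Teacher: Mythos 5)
Your proposal is correct and follows essentially the same route as the paper's proof: the complex Stein equation of Lemma~\ref{lem:3}, the rewriting $F_k=\LL\LL^{-1}F_k$ followed by integration by parts and the chain rule to produce Hilbert--Schmidt pairings of the complex Hessians of $U_h$ with $\Gamma(F,-\LL^{-1}F)-\Sigma$ and $\Gamma(\overline{F},-\LL^{-1}F)$, then Lemma~\ref{lem:5} and Cauchy--Schwarz (the factor $2$ arising from the four resulting terms), and finally the smoothing argument for general Lipschitz $h$. Your remark that $\Gamma(\overline{F},-\LL^{-1}F)$ targets the vanishing relation matrix is a nice conceptual gloss not made explicit in the paper, but it does not alter the argument.
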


{
\begin{remark}
  In the case of the real Ornstein-Uhlenbeck generator, a similar bound was
  obtained in~\cite[Theorem 3.3]{nourdin_multivariate_2010} through the use of
  Malliavin calculus. Formulated in the language of Markov diffusion generators,
  this bound reads
  \begin{equation*}
    d_W(U,W)
    \leq  \norm{C^{-1}}_{\text{op}} \norm{C}_{\text{op}}^{1/2}
     \left(
        \int_E^{}
          \norm{\Gamma(U,-\LL^{-1} U) - C}_{\text{HS}}^2 \diff{\mu}
          \right)^{1/2},
  \end{equation*}
  where $U$ is a real-valued, smooth random vector and $W \sim
  \mathcal{N}_d(0,C)$ a $d$-dimensional centered real Gaussian vector. Compared
  to the bound~\eqref{eq:5}, we see that in the complex setting a second
  $\Gamma$-term appears. 
\end{remark}
}

\begin{proof}[Proof of Theorem~\ref{thm:1}]
  Let $h \in C^2(\C^d)$ with bounded first and second derivatives and denote
  by $U_h$ the solution~\eqref{eq:4} to the Stein equation of Lemma~\ref{lem:3}. 
  By the integration by parts formula and the chain rule for $\Gamma$, it holds
  that
  \begin{align*}
    \int_E^{} \left\langle \nabla U_h(F),\overline{F} \right\rangle_{\C^d} \diff{\mu}
    &=
    \sum_{k=1}^d \int_E^{} \left(\partial_{z_k} U_h(F)\right) F_k \diff{\mu}
    \\ &=
    \sum_{k=1}^d \int_E^{} \left(\partial_{z_k} U_h(F)\right)
         \LL \LL^{-1}F_k \diff{\mu}
    \\ &=
    \sum_{k=1}^d \int_E^{}  \Gamma \left(\partial_{z_k} U_h(F),
         -\LL^{-1}\overline{F}_k\right) \diff{\mu}
    \\ &=
    \sum_{j,k=1}^d \int_E^{}  \Big( \partial_{z_jz_k} U_h(F) \Gamma \left( F_j,
         -\LL^{-1}\overline{F}_k\right)
    \\ & \qquad \qquad \qquad \qquad +
         \partial_{\overline{z}_jz_k} U_h(F) \Gamma \left( \overline{F}_j,
         -\LL^{-1}\overline{F}_k\right)
         \Big) \diff{\mu}
    \\ &=
         \int_E^{}
         \left\langle \nabla\nabla U_h(F), \Gamma(\overline{F},-\LL^{-1} F)
         \right\rangle_{\text{HS}} \diff{\mu}
    \\ &\qquad \qquad \qquad \qquad+
         \int_E^{}
         \left\langle \overline{\nabla}\nabla U_h(F), \Gamma(F,-\LL^{-1} F)
         \right\rangle_{\text{HS}}
         \diff{\mu}.
  \end{align*}
  Analogously 
  \begin{align*}
    \int_E^{} \left\langle \overline{\nabla} f(F),F
    \right\rangle_{\C^d} \diff{\mu}
    &=
         \int_E^{}
         \left\langle \overline{\nabla\nabla} f(F), \Gamma(F,-\LL^{-1} \overline{F})
         \right\rangle_{\text{HS}} \diff{\mu}
    \\ &\qquad \qquad \qquad \qquad+
         \int_E^{}
         \left\langle \nabla \overline{\nabla} f(F), \Gamma(\overline{F},-\LL^{-1} \overline{F})
         \right\rangle_{\text{HS}}
         \diff{\mu}.
  \end{align*}
  Plugging these two identities into the complex Stein equation yields
  \begin{align*}
\notag
   \int_E^{} h(F) \diff{\mu} - \Ex{h(Z)} &=
    \int_E^{}
         \left\langle \nabla\nabla U_h(F), \Gamma(\overline{F},-\LL^{-1} F)
         \right\rangle_{\text{HS}} \diff{\mu}
    \\ &\qquad+         \notag 
         \int_E^{}
         \left\langle \overline{\nabla\nabla} U_h(F), \Gamma(F,-\LL^{-1} \overline{F})
         \right\rangle_{\text{HS}} \diff{\mu}
    \\ & \qquad+ \notag
         \int_E^{}
         \left\langle \nabla\overline{\nabla} U_h(F), \Gamma(F,-\LL^{-1} F) - \Sigma
         \right\rangle_{\text{HS}}
         \diff{\mu}
    \\ &\qquad+ \notag
         \int_E^{}
         \left\langle \overline{\nabla} \nabla U_h(F),
         \Gamma(\overline{F},-\LL^{-1} \overline{F} - \overline{\Sigma})
         \right\rangle_{\text{HS}}
         \diff{\mu}
    \\ &= I_1 + I_2 + I_3 +I_4,
  \end{align*}
  so that
  \begin{align}
    \notag
    \abs{   \int_E^{} h(F) \diff{\mu} - \Ex{h(Z)}}
    &= \sqrt{\abs{I_1+I_2+I_3+I_4}^2}
    \\ \label{eq:11}  &\leq 2
    \sqrt{\abs{I_1}^2 + \abs{I_2}^2 + \abs{I_3}^2 + \abs{I_4}^2}.
  \end{align}

  Using Lemma~\ref{lem:5}, we obtain
  \begin{align*}
    \abs{I_1}^2+\abs{I_3}^2
    &\leq
      \int_E^{} \left(
      \abs{         \left\langle \nabla\nabla U_h(F),
      \Gamma(\overline{F},-\LL^{-1} F)  \right\rangle_{\text{HS}}}^2 \right.
    \\ &\qquad \qquad \qquad \qquad+
         \left.
         \abs{\left\langle \overline{\nabla\nabla} U_h(F), \Gamma(F,-\LL^{-1}
         \overline{F}) 
         \right\rangle_{\text{HS}}}^2
               \right)      \diff{\mu}
                              \\ &\leq
      \norm{\Sigma^{-1}}_{\text{op}}^2 \norm{\Sigma}_{\text{op}}
                    \norm{h}_{\text{Lip}}^2
                            \int_E^{}
         \norm{\Gamma(\overline{F},-\LL^{-1} F)}_{\text{HS}}^2
         \diff{\mu} 
    \\
    \intertext{and similarly}
    \abs{I_2}^2+\abs{I_4}^2 &\leq
                              \norm{\Sigma^{-1}}_{\text{op}}^2
                    \norm{\Sigma}_{\text{op}}  \norm{h}_{\text{Lip}}^2
            \int_E^{} \norm{\Gamma(F,-\LL^{-1} F) - \Sigma}_{\text{HS}}^2 \diff{\mu}.
  \end{align*}
  Plugged back into~\eqref{eq:11}, this gives
  \begin{multline*}
    \abs{ \int_E^{}h(F) \diff{\mu} - \Ex{h(Z)}}
    \\ \leq 2
                  \norm{\Sigma^{-1}}_{\text{op}} \norm{\Sigma}_{\text{op}}^{1/2}
      \norm{h}_{\text{Lip}} 
         \Big(
         \int_E^{}
         \norm{\Gamma(\overline{F},-\LL^{-1} F)}_{\text{HS}}^2
         \diff{\mu}
     \\ +
         \int_E^{} \norm{\Gamma(F,-\LL^{-1} F) - \Sigma}_{\text{HS}}^2 \diff{\mu}
          \Big)^{1/2},
  \end{multline*}
  where $h \in \mathcal{C}^2(\C^d,\C)$ with bounded first and second
  derivatives. The proof is finished by noting that any Lipschitz function $g$
  can be uniformly approximated by functions of this type (take for example
  $g_{\varepsilon}(z) = \Ex{g(z + \sqrt{\varepsilon} Z)}$, where $Z \sim
  C\mathcal{N}_d(0,\Id_d)$; see~\cite[proof of Lemma
  3.3]{nourdin_multivariate_2010}). 
  \end{proof}

  {
  In the framework of real Markov diffusion generators, one can obtain bounds
  for the stronger Kolmogorov and total variation distances when specializing to 
  dimension one. As a complex random variable corresponds to a two-dimensional
  real random vector, this strengthening is of course no longer possible using
  this approach.}    

  For chaotic complex random vectors, the integrals appearing in the
  bound~\eqref{eq:5} of Theorem~\ref{thm:1} can be expressed {purely} in terms of
  moments as follows.

  \begin{theorem}
    \label{thm:4}
    For $d\geq 1$, let $Z \sim C\mathcal{N}_d(0,\Sigma)$, where
    {$\Sigma=(\sigma_{j,k})_{1 \leq j,k \leq d}$}, and
    $F=(F_1,\dots,F_d)$ 
    be a chaotic complex random vector. Then it holds that
    \begin{equation}
      \label{eq:21}
      d_{W}(F,Z) \leq 
                        \norm{\Sigma^{-1}}_{\text{op}}
                        \norm{\Sigma}_{\text{op}}^{1/2} 
      \sqrt{\Psi_1(F) + \Psi_2(F) + \Psi_3(F)},
    \end{equation}
    where
    \begin{align*}
      \Psi_1(F) &=  \sum_{j,k=1}^d \abs{
                  \int_E^{} F_j \overline{F}_k \diff{\mu} - \sigma_{j,k}}^2
     \\             
      \Psi_2(F) &= \sum_{j,k=1}^d
                  \sqrt{
                  \int_E^{} \abs{F_j}^4 \diff{\mu}
                  \left( \frac{1}{2} \int_E^{} \abs{F_k}^4 \diff{\mu} -
                  \left( \int_E^{} \abs{F_k}^2 \diff{\mu} \right) \right)
                  }
                  \intertext{and}
                  \Psi_3(F) &= \sum_{j,k=1}^d
                              \int_E^{} \abs{F_jF_k}^2 \diff{\mu} - \int_E^{}
                              \abs{F_j}^2 \diff{\mu} \int_E^{} \abs{F_k}^2
                    \diff{\mu}
                              -
                              \abs{
                              \int_E^{} F_j \overline{F}_k \diff{\mu} }^2.
    \end{align*}
  \end{theorem}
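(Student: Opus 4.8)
The plan is to start from the bound of Theorem~\ref{thm:1} and evaluate its two $\Gamma$-integrals on the chaotic vector $F$. Since each $F_k$ is an eigenfunction, $\LL F_k = -\lambda_{p_k} F_k$, and because the eigenspaces are closed under conjugation the same holds for $\overline F_k$; hence $-\LL^{-1}F_k = \lambda_{p_k}^{-1}F_k$ and $-\LL^{-1}\overline F_k = \lambda_{p_k}^{-1}\overline F_k$. Substituting this into the matrices $\Gamma(\overline F,-\LL^{-1}F)$ and $\Gamma(F,-\LL^{-1}F)$ replaces the operator $-\LL^{-1}$ by scalars, so the first integral in~\eqref{eq:5} becomes $\sum_{j,k}\lambda_{p_k}^{-2}\int_E\abs{\Gamma(\overline F_j,F_k)}^2\diff{\mu}$ and the second becomes $\sum_{j,k}\int_E\abs{\lambda_{p_k}^{-1}\Gamma(F_j,F_k)-\sigma_{j,k}}^2\diff{\mu}$.

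Next I would isolate $\Psi_1$. By the integration by parts formula and $\LL\overline F_k=-\lambda_{p_k}\overline F_k$, the mean of $\lambda_{p_k}^{-1}\Gamma(F_j,F_k)$ equals $\int_E F_j\overline F_k\diff{\mu}$. Writing each term of the second sum through the complex variance decomposition $\int\abs{\xi-\sigma}^2=\abs{\Ex{\xi}-\sigma}^2+\int\abs{\xi-\Ex{\xi}}^2$ around this mean splits the second integral into exactly $\Psi_1(F)$ plus a sum of variances $\sum_{j,k}\bigl(\lambda_{p_k}^{-2}\int_E\abs{\Gamma(F_j,F_k)}^2\diff{\mu}-\abs{\int_E F_j\overline F_k\diff{\mu}}^2\bigr)$.

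The heart of the argument is to bound the remaining $\Gamma$-square integrals by absolute fourth moments, and here two tools combine. The first is the chaos inequality of Corollary~\ref{cor:1}, which controls $\int_E\abs{\Gamma(G,H)}^2\diff{\mu}$ by $\int_E\overline G\,H\,\Gamma(G,H)\diff{\mu}$ for jointly chaotic $G,H$. The second is a family of quartic integration by parts identities obtained from $\int_E\LL(\varphi(F_j,F_k))\diff{\mu}=0$ applied, via the complex diffusion property~\eqref{eq:28}, to the quartics $\varphi=\abs{z_j}^4$, $\abs{z_jz_k}^2$, $z_j^2\overline z_k^2$, and so on; these are the complex analogues of the real fourth moment identity and express integrals such as $\int_E\abs{F_k}^2\Gamma(F_k,F_k)\diff{\mu}$ and the mixed integrals $\int_E F_jF_k\,\Gamma(\overline F_j,F_k)\diff{\mu}$ in terms of $\int_E\abs{F_j}^4$, $\int_E\abs{F_jF_k}^2$ and second moments. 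Feeding these identities into Corollary~\ref{cor:1} shows that each variance term is dominated by the corresponding summand of $\Psi_3$, while for the first integral one combines two different estimates of $\int_E\abs{\Gamma(\overline F_j,F_k)}^2\diff{\mu}$ --- one via Cauchy--Schwarz producing an $\int_E\abs{F_j}^4$ factor, the other via the quartic identity producing the defect factor $\tfrac12\int_E\abs{F_k}^4-\bigl(\int_E\abs{F_k}^2\bigr)^2$ --- and takes their geometric mean, using $\min(x,y)\le\sqrt{xy}$, which is precisely what produces the square-root form of $\Psi_2$.

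I expect the main obstacle to be the bookkeeping in this last step: carrying out the mixed quartic $\LL$-computations with all four letters $F_j,\overline F_j,F_k,\overline F_k$, verifying that the various $\Gamma(\overline F_j,F_k)$, $\Gamma(F_j,F_k)$ and $\Gamma(F_j,F_j)$ contributions recombine into exactly $\Psi_1,\Psi_2,\Psi_3$ (in particular that the anomalous moments $\int_E F_jF_k\diff{\mu}$ are absorbed rather than left over), and tracking the numerical constants. The latter point is delicate, since the crude step $\abs{\sum_{i=1}^4 I_i}\le 2\bigl(\sum\abs{I_i}^2\bigr)^{1/2}$ used in Theorem~\ref{thm:1} is too lossy here; instead I would exploit the conjugate pairing of the four Stein terms and the freedom in the parameter $\alpha$ of Lemma~\ref{lem:5} to recover the stated constant $\norm{\Sigma^{-1}}_{\text{op}}\norm{\Sigma}_{\text{op}}^{1/2}$. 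Once every term is bounded, the theorem follows by one Cauchy--Schwarz inequality over the index pairs $(j,k)$ together with the same density and smoothing approximation of Lipschitz functions used at the end of the proof of Theorem~\ref{thm:1}.
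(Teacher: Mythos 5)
Your overall strategy coincides with the paper's: reduce to Theorem~\ref{thm:1}, then use Corollary~\ref{cor:1} together with the chain rule and integration by parts for $\Gamma$ (your ``quartic identities'') to turn the two $\Gamma$-integrals into moments, finishing with Cauchy--Schwarz. However, two of the specific mechanisms you describe are not what makes the bookkeeping close, and the second is a substantive mis-step rather than a detail. The two integrals in the bound of Theorem~\ref{thm:1} cannot be handled separately in the way you propose (``variance terms dominated by $\Psi_3$'', ``first integral produces $\Psi_2$''). The argument hinges on a cancellation between them: Corollary~\ref{cor:1} bounds the variance part of the second integral by $\int_E \overline F_jF_k\,\Gamma(F_j,-\LL^{-1}F_k)\diff{\mu}-\abs{\int_E F_j\overline F_k\diff{\mu}}^2$, while expanding $\int_E F_jF_k\,\Gamma(\overline F_j,-\LL^{-1}F_k)\diff{\mu}$ (which dominates the first integral) through the chain rule applied to $\Gamma(F_j\overline F_jF_k,-\LL^{-1}F_k)$ yields $\int_E\abs{F_jF_k}^2\diff{\mu}-\int_E\abs{F_j}^2\Gamma(F_k,-\LL^{-1}F_k)\diff{\mu}$ \emph{minus} exactly that same quantity $\int_E\overline F_jF_k\,\Gamma(F_j,-\LL^{-1}F_k)\diff{\mu}$. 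Only after this cancellation does the sum collapse to $\Psi_1+\Psi_3$ plus the leftover term $-\sum_{j,k}\int_E\abs{F_j}^2\bigl(\Gamma(F_k,-\LL^{-1}F_k)-\int_E\abs{F_k}^2\diff{\mu}\bigr)\diff{\mu}$; neither of your two separate claims holds term by term.

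Consequently, $\Psi_2$ does not arise from a geometric mean ($\min(x,y)\le\sqrt{xy}$) of two competing estimates of $\int_E\abs{\Gamma(\overline F_j,-\LL^{-1}F_k)}^2\diff{\mu}$. It comes from a single Cauchy--Schwarz applied to the leftover cross term just mentioned, combined with the estimate $\int_E\Gamma(F_k,-\LL^{-1}F_k)^2\diff{\mu}\le\tfrac12\int_E\abs{F_k}^4\diff{\mu}$, which is itself obtained from Corollary~\ref{cor:1} and the chain rule on $\Gamma(F_k^2\overline F_k,-\LL^{-1}F_k)$ after discarding the nonnegative term $\int_E F_k^2\Gamma(\overline F_k,-\LL^{-1}F_k)\diff{\mu}$. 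On the constant: your observation about the apparent loss of the factor $2$ from Theorem~\ref{thm:1} is fair, but the paper does not perform the sharpening via conjugate pairing and the parameter $\alpha$ that you suggest; it simply shows $\int_E\bigl(\norm{\Gamma(\overline F,-\LL^{-1}F)}_{\text{HS}}^2+\norm{\Gamma(F,-\LL^{-1}F)-\Sigma}_{\text{HS}}^2\bigr)\diff{\mu}\le\Psi_1(F)+\Psi_2(F)+\Psi_3(F)$ and invokes Theorem~\ref{thm:1} as stated.
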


  \begin{proof}
    In view of Theorem~\ref{thm:1}, we have to show that
    \begin{multline*}
      \int_E^{}
      \left(
         \norm{\Gamma(\overline{F},-\LL^{-1} F)}_{\text{HS}}^2
     +
         \norm{\Gamma(F,-\LL^{-1} F) - \Sigma}_{\text{HS}}^2 \right) \diff{\mu}
      \\ \leq \Psi_1(F) + \Psi_2(F) + \Psi_3(F).
    \end{multline*}
  When expanding the two Hilbert-Schmidt norms, the integral on the left hand
  side becomes
  \begin{equation}
    \label{eq:43}
    \sum_{j,k=1}^d
    \int_E^{} \left(
         \abs{\Gamma(\overline{F}_j,-\LL^{-1}F_k)}^2
         +
         \abs{\Gamma(F_j,-\LL^{-1}F_k) - \sigma_{j,k}}^2
         \right)
    \diff{\mu}
  \end{equation}
  Now note that by integration by parts and Corollary~\ref{cor:1},
  \begin{align}
    \int_E^{}\abs{\Gamma(F_j,-\LL^{-1}F_k) - \sigma_{j,k}}^2 \diff{\mu} \notag
    &=
         \int_{E} \abs{\Gamma(F_j,-\LL^{-1}F_k)}^2 \diff{\mu} + \abs{\sigma_{j,k}}^2
    \\ &\qquad - \notag
         2
         \mathfrak{Re} 
    \left( \int_E^{}\Gamma(F_j,-\LL^{-1}F_k) \diff{\mu} \,
         \overline{\sigma}_{j,k}  \right)
    \\ &= \notag
        \int_{E} \abs{\Gamma(F_j,-\LL^{-1}F_k)}^2 \diff{\mu} +
         \abs{\sigma_{j,k}}^2
     \\ &\qquad  \notag          -   2
         \mathfrak{Re} 
    \left( \int_E^{}F_j \overline{F}_k \diff{\mu} \,
          \overline{\sigma}_{j,k}  \right)
    \\ &= \notag
         \int_{E} \abs{\Gamma(F_j,-\LL^{-1}F_k)}^2 \diff{\mu} 
         - \abs{\int_E^{} F_j \overline{F}_k \diff{\mu}}^2
    \\ &\qquad \notag
         + \abs{ \int_E^{} F_j \overline{F_k} \diff{\mu} - \sigma_{j,k}}^2
    \\ &\leq \label{eq:40}
         \int_{E} \overline{F}_j F_k\Gamma(F_j,-\LL^{-1}F_k) \diff{\mu} 
         - \abs{\int_E^{} F_j \overline{F}_k \diff{\mu}}^2
    \\ &\qquad \notag
         + \abs{ \int_E^{} F_j \overline{F_k} \diff{\mu} - \sigma_{j,k}}^2.
  \end{align}
  On the other hand, by~Corollary~\ref{cor:1}, the chain rule and integration by parts,
  \begin{align}
    \int_E^{} \abs{\Gamma(\overline{F}_j,-\LL^{-1}F_k)}^2 \diff{\mu}
    & \leq \notag
      \int_E^{} F_j F_k \Gamma(\overline{F}_j,-\LL^{-1}F_k) \diff{\mu}
    \\ &= \label{eq:42}
         \int_E^{} \Gamma(F_j\overline{F}_jF_{k},-\LL^{-1}F_k) \diff{\mu}
    \\ & \qquad \qquad - \notag
         \int_E^{} \abs{F_j}^{2} \Gamma(F_{k},-\LL^{-1}F_k)
         \diff{\mu}
    \\ & \qquad \qquad - \notag
     \int_E^{} \overline{F}_jF_{k} \Gamma(F_j,-\LL^{-1}F_k)
         \diff{\mu}
  \end{align}
  Plugging~\eqref{eq:40} and~\eqref{eq:42} into~\eqref{eq:43} yields that
  \begin{multline*}
    \sum_{j,k=1}^d
    \int_E^{} \left(
         \abs{\Gamma(\overline{F}_j,-\LL^{-1}F_k)}^2
         +
         \abs{\Gamma(F_j,-\LL^{-1}F_k) - \sigma_{jk}}^2
         \right)
    \diff{\mu}
    \\ \leq
    \Psi_{1} - \sum_{j,k=1}^d \int_E^{} \abs{F_j}^2 \left( \Gamma(F_k,-\LL^{-1}F_k) -
    \int_E^{} \abs{F_k}^2 \right) \diff{\mu} + \Psi_3.
  \end{multline*}
  To see that the sum in the middle is bounded by $\Psi_2(F)$, we apply
  Cauchy-Schwarz to each summand and then make use of the complex Gamma calculus
  once more to transform the remaining $\Gamma$-expression into a moment:
  \begin{align*}
    \int_E^{} \Gamma(F_k,-\LL^{-1}F_k)^2 \diff{\mu}
    &\leq
      \int_E^{} F_k \overline{F}_k \Gamma(F_k,-\LL^{-1}F_k) \diff{\mu}
    \\ &=
         \frac{1}{2}
         \int_E^{} \Gamma(F_k^2\overline{F_k},-\LL^{-1}F_k) \diff{\mu}
    \\ &\qquad \qquad  -
         \int_E^{} F_k^2 \Gamma(\overline{F_k},-\LL^{-1}F_k) \diff{\mu}
    \\ &\leq
    \frac{1}{2}
         \int_E^{} \Gamma(F_k^2\overline{F_k},-\LL^{-1}F_k) \diff{\mu}
    \\ &=
    \frac{1}{2} \int_E^{} \abs{F_k}^4 \diff{\mu},
  \end{align*}
  {where the last inequality follows from Corollary~\ref{cor:1}, which implies
  that $\int_E^{}F_k^2 \Gamma(\overline{F}_k,-\LL^{-1}F_k)\diff{\mu} \geq 0$.}
\end{proof}

For eigenfunctions of the real Ornstein-Uhlenbeck generator, a bound of a
similar type has been proven in~\cite[Theorem 1.5]{noreddine_gaussian_2011}
using Malliavin calculus{, with the notable difference that only
  non-mixed fourth moments appear. The same strategy could be followed to
  prove a refined version of the bound~\eqref{eq:21} for eigenfunctions of the
  complex Ornstein-Uhlenbeck generator (i.e. complex multiple Wiener-It\^o
  integrals; see Example~\ref{ex:1}), exclusively involving the second moments
  $\int_E^{} F_j \overline{F}_k \diff{\mu}$, $1 \leq j,k \leq d$ and the
  non-mixed fourth moments $\int_E^{} \abs{F_j}^4 \diff{\mu}$, $1 \leq j \leq
  d$.} 

Applying the Gaussian integration by parts formula, one sees for~$Z \sim
C\mathcal{N}_d(0,\Sigma)$ and $j=1,2,3$ that indeed $\Psi_j(Z)=0$. Therefore, we
have the following corollary.

\begin{corollary}
  \label{cor:3}
  For $d \geq 1$, let $Z \sim C\mathcal{N}_d(0,\Sigma)$ and
  $F_n=(F_{1,n},\dots,F_{d,n})$ 
  be a sequence of centered chaotic complex random vectors.
  Then, $(F_n)$ converges in distribution towards $Z$, if, and only if,
  \begin{equation}
    \label{eq:44}
    \int_E^{} F_{j,n} \overline{F}_{k,n} \diff{\mu} \to \Ex{ Z_j \overline{Z}_k}
  \end{equation}
  and
  \begin{equation}
    \label{eq:6}
    \int_E^{} \abs{F_{j,n} F_{k,n}}^2 \diff{\mu}  \to \Ex{ \abs{Z_j Z_k}^2}
  \end{equation}
  for $1 \leq j,k \leq d$.  
\end{corollary}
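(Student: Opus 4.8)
The plan is to treat the two implications separately. The substantive direction is sufficiency, which I would obtain from the quantitative bound~\eqref{eq:21} of Theorem~\ref{thm:4}; the necessity direction will follow by upgrading convergence in law to convergence of moments. For both directions I will need the two complex Gaussian moment identities $\Ex{\abs{Z_j}^4}=2\sigma_{j,j}^2$ and $\Ex{\abs{Z_jZ_k}^2}=\sigma_{j,j}\sigma_{k,k}+\abs{\sigma_{j,k}}^2$. These follow by iterating the integration by parts formula of Lemma~\ref{lem:4}: the associated Wick expansion pairs each factor $Z_a$ with a factor $\overline{Z}_b$ (since $\Ex{Z_aZ_b}=0$), contributes $\sigma_{a,b}=\Ex{Z_a\overline{Z}_b}$ per pair, and uses $\sigma_{k,j}=\overline{\sigma}_{j,k}$ because $\Sigma$ is Hermitian.

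For sufficiency I would assume~\eqref{eq:44} and~\eqref{eq:6} and show that each of the three nonnegative quantities $\Psi_1(F_n)$, $\Psi_2(F_n)$, $\Psi_3(F_n)$ from~\eqref{eq:21} tends to zero; Theorem~\ref{thm:4} then gives $d_W(F_n,Z)\to0$, and since the Wasserstein distance dominates weak convergence this yields $F_n\to Z$ in distribution. The term $\Psi_1(F_n)\to0$ is immediate from~\eqref{eq:44}. For $\Psi_3(F_n)$ I would combine~\eqref{eq:6}, the diagonal case $j=k$ of~\eqref{eq:44}, and the identity for $\Ex{\abs{Z_jZ_k}^2}$, so that each summand converges to $(\sigma_{j,j}\sigma_{k,k}+\abs{\sigma_{j,k}}^2)-\sigma_{j,j}\sigma_{k,k}-\abs{\sigma_{j,k}}^2=0$. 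For $\Psi_2(F_n)$ the diagonal case $j=k$ of~\eqref{eq:6} forces $\int_E\abs{F_{j,n}}^4\diff{\mu}\to2\sigma_{j,j}^2$, so the first factor under each square root stays bounded, while the bracket $\tfrac{1}{2}\int_E\abs{F_{k,n}}^4\diff{\mu}-\left(\int_E\abs{F_{k,n}}^2\diff{\mu}\right)^2$ converges to $\sigma_{k,k}^2-\sigma_{k,k}^2=0$; hence each summand, and therefore the finite sum $\Psi_2(F_n)$, tends to zero.

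For necessity I would assume $F_n\to Z$ in distribution and promote this to the moment convergences~\eqref{eq:44} and~\eqref{eq:6}. The key input is hypercontractivity: since each component $F_{j,n}$ sits in a fixed eigenspace of $\LL$, all of its $L^p(\mu)$-norms are controlled by its $L^2(\mu)$-norm, a property that transfers to the complex setting by applying Proposition~\ref{prop:1} to real and imaginary parts. First, tightness (a consequence of weak convergence) together with the hypercontractive bound $\int_E\abs{F_{j,n}}^4\diff{\mu}\leq c\left(\int_E\abs{F_{j,n}}^2\diff{\mu}\right)^2$ and a Paley--Zygmund argument would rule out escape of $L^2$-mass to infinity, giving $\sup_n\int_E\abs{F_{j,n}}^2\diff{\mu}<\infty$. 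Hypercontractivity then bounds all higher moments uniformly in $n$, so that the families $\{F_{j,n}\overline{F}_{k,n}\}_n$ and $\{\abs{F_{j,n}F_{k,n}}^2\}_n$ are uniformly integrable. The continuous mapping theorem gives weak convergence of these quantities to $Z_j\overline{Z}_k$ and $\abs{Z_jZ_k}^2$, and uniform integrability upgrades this to convergence of the corresponding integrals, which is precisely~\eqref{eq:44} and~\eqref{eq:6}.

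The step I expect to be the main obstacle is this necessity direction: the sufficiency is essentially bookkeeping once the Gaussian moments are in hand, whereas passing from weak convergence to convergence of the second and fourth moments genuinely requires the hypercontractive structure of the chaos, together with the (implicit) assumption that the orders $p_j$ of the components stay fixed along the sequence. If one only wants the corollary as a quantitative sufficient criterion, this obstacle disappears and nothing beyond the $\Psi_j(F_n)\to0$ computation is needed.
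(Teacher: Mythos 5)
Your sufficiency argument is exactly the paper's: the entire proof the paper offers is the one sentence preceding the statement, namely that Gaussian integration by parts gives $\Psi_j(Z)=0$ for $j=1,2,3$, so that under~\eqref{eq:44} and~\eqref{eq:6} each $\Psi_j(F_n)\to 0$ and Theorem~\ref{thm:4} forces $d_W(F_n,Z)\to 0$. Your computation of the limits (using $\Ex{\abs{Z_jZ_k}^2}=\sigma_{j,j}\sigma_{k,k}+\abs{\sigma_{j,k}}^2$ and $\Ex{\abs{Z_j}^4}=2\sigma_{j,j}^2$) is the intended bookkeeping, and you have correctly read the bracket in $\Psi_2$ as $\tfrac12\int\abs{F_k}^4\diff{\mu}-\bigl(\int\abs{F_k}^2\diff{\mu}\bigr)^2$, i.e.\ with the square that is evidently missing from the displayed formula (this is what the derivation in the proof of Theorem~\ref{thm:4} actually produces).

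The one place where your argument outruns what the paper supplies is the necessity direction, which the paper does not prove at all. Your route (uniform integrability of $\abs{F_{j,n}\overline{F}_{k,n}}$ and $\abs{F_{j,n}F_{k,n}}^2$ via a hypercontractive bound $\int_E\abs{F_{j,n}}^{2r}\diff{\mu}\leq c_r\bigl(\int_E\abs{F_{j,n}}^2\diff{\mu}\bigr)^r$) is the standard one, but such a bound is \emph{not} a consequence of Definition~\ref{def:1}: chaoticity only controls the spectral expansion of pairwise products, it does not let you iterate to control sixth or eighth moments, and no hypercontractivity estimate is stated or proved anywhere in the paper's abstract framework. It is genuinely available for the Ornstein--Uhlenbeck generator (Nelson's inequality on a fixed Wiener chaos, with the constant depending on the order $p_j$, which as you note must stay fixed along the sequence), and more generally whenever the semigroup is hypercontractive, but in the stated generality it is an additional structural hypothesis. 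So treat this as a gap you share with the paper rather than one you introduced: either restrict the ``only if'' to settings where moment equivalence on eigenspaces is known, or add it as an explicit assumption.
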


\begin{remark}\hfill
  \begin{enumerate}[(i)]
  \item For $d=1$ and $\Sigma=\sigma^2>0$, Corollary~\ref{cor:3} says that a
    sequence of centered chaotic eigenfunctions converges in distribution
    towards a one-dimensional centered complex Gaussian random variable with
    variance~$\sigma^2$, if, and only if, its second and fourth absolute
    moments converge towards $\sigma^2$ and 
    $2\sigma^4$, respectively. This is the complex counterpart
    of the abstract Fourth Moment Theorem for Gaussian approximation
    (\cite[Corollary 3.3]{azmoodeh_fourth_2014}). If, in addition, we take $\LL$
    to be the complex Ornstein-Uhlenbeck generator (see Example~\ref{ex:1}), we
    obtain~\cite[Theorem 1.1.1]{chen_fourth_2014}).
 \item For $d \geq 2$, Corollary~\ref{cor:3} is the complex counterpart
   of~\cite[Theorem 1.2]{campese_multivariate_2015}.
  \end{enumerate}
\end{remark}

If $\LL$ is the real Ornstein-Uhlenbeck generator, the Peccati-Tudor Theorem
(\cite{peccati_gaussian_2005}) says that a centered sequence $(F_{n})$ of
vectors of eigenfunctions of $\LL$ (i.e. multiple integrals) converges jointly
in distribution towards a centered Gaussian random vector with covariance
$\Sigma$, if, and only if, $\on{Var}(F_n) \to 0$ and each component sequence
converges separately towards a (one-dimensional) Gaussian. This result has been
generalized in~\cite[Proposition 3.5]{campese_multivariate_2015} to the abstract
diffusion generator framework. A straightforward adaptation of {the latter
finding} yields the following complex Peccati-Tudor Theorem:

\begin{proposition}
  For $d \geq 2$, let $Z\sim C\mathcal{N}_d(0,\Sigma)$, where $\Sigma$ is
  positive definite and Hermitan, and let $F_n=(F_{1,n},\dots,F_{d,n})$ be a
  centered chaotic vector whose covariance converges towards $\Sigma$ as $n \to
  \infty$. Furthermore, assume that
  \begin{enumerate}[1.]
  \item  The underlying generator $\LL$ is ergodic, in the sense that its kernel
    only consists of constants
  \item  If, for $1 \leq j < k \leq d$ and $j \neq k$, the pair
    $(F_{j,n},F_{k,n})_n$ has a subsequence $(F_{j,n_{l}},F_{k,n_l})_l$ such
    that $F_{j,n_l}$ and $F_{k,n_l}$ are elements of the same eigenspace with
    eigenvalue $\lambda_{l}$ for all $l$, it holds that 
  \begin{equation*}
    \int_E^{} \pi_{2\lambda_l}(F_{j,n_l}^2)
    \pi_{2\lambda_l}(\overline{F}_{j,n_l}^2) \diff{\mu}
    -
    2 \left( \int_E^{} F_{j,n_l} \overline{F}_{j,n_l} \diff{\mu} \right)^2 \to 0,
  \end{equation*}
  where $\pi_{\lambda}$ denotes the orthogonal projection onto $\ker
  \left(L+\lambda\Id\right)$.
  \end{enumerate}
  Then, the following two assertions are equivalent.
  \begin{enumerate}[(i)]
  \item $F_n \xrightarrow{d} Z$.
  \item For $1 \leq j \leq d$ it holds that $F_{j,n} \xrightarrow{d} Z_j$.
  \end{enumerate}
\end{proposition}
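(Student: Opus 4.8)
Since convergence in distribution of the whole vector $F_n$ entails convergence of each of its marginals, the implication (i)$\,\Rightarrow\,$(ii) is immediate, and the task is to prove the converse. The plan, adapting the proof of~\cite[Proposition~3.5]{campese_multivariate_2015} to the present complex setting, is to invoke the quantitative bound~\eqref{eq:21} of Theorem~\ref{thm:4}: as $F_n$ is chaotic, it suffices to show that $\Psi_1(F_n)$, $\Psi_2(F_n)$ and $\Psi_3(F_n)$ all converge to zero. The term $\Psi_1(F_n)$ vanishes by the assumed convergence of the covariance towards $\Sigma$. In $\Psi_2(F_n)$ each summand equals $\bigl(\int_E\abs{F_{j,n}}^4\diff\mu\bigr)^{1/2}$ times the square root of the one-dimensional fourth-moment excess $\tfrac12\int_E\abs{F_{k,n}}^4\diff\mu-\bigl(\int_E\abs{F_{k,n}}^2\diff\mu\bigr)^2$ of the $k$-th marginal; the first factor stays bounded (it converges to $\Ex{\abs{Z_j}^4}$) while, by $F_{k,n}\xrightarrow{d}Z_k$ and the one-dimensional complex Fourth Moment Theorem (Corollary~\ref{cor:3} with $d=1$), the excess tends to zero. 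Hence $\Psi_2(F_n)\to0$, and the same reasoning disposes of the diagonal terms $j=k$ of $\Psi_3(F_n)$.

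The whole difficulty therefore lies in the off-diagonal terms $j\neq k$ of $\Psi_3(F_n)$,
\[
  T_{j,k}(n):=\int_E\abs{F_{j,n}F_{k,n}}^2\diff\mu-\int_E\abs{F_{j,n}}^2\diff\mu\int_E\abs{F_{k,n}}^2\diff\mu-\Bigl|\int_E F_{j,n}\overline{F}_{k,n}\diff\mu\Bigr|^2,
\]
which is exactly $\on{Cov}(\abs{F_{j,n}}^2,\abs{F_{k,n}}^2)-\abs{\sigma_{j,k}^{(n)}}^2$ with $\sigma_{j,k}^{(n)}:=\int_E F_{j,n}\overline{F}_{k,n}\diff\mu$, and which vanishes for $Z$ since, for the circularly symmetric complex Gaussian, $\on{Cov}(\abs{Z_j}^2,\abs{Z_k}^2)=\abs{\sigma_{j,k}}^2$ (a consequence of~\eqref{eq:35} and Wick's formula). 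I would argue by contradiction: if $T_{j,k}(n)\not\to0$ for some pair, I extract a subsequence on which $\abs{T_{j,k}}$ stays bounded away from zero and, using discreteness of the spectrum together with the boundedness of the second and fourth moments of every component, pass to a further subsequence along which the eigenvalue indices of $F_{j,n}$ and $F_{k,n}$ each stabilise or diverge. Decomposing the centred real functions $\abs{F_{j,n}}^2$ and $\abs{F_{k,n}}^2$ over the eigenspaces of $\LL$ — where ergodicity (hypothesis~1) guarantees that the projection onto $\ker\LL$ is the mean, so the spectral sum starts at $\lambda_m>0$ — I would write $\on{Cov}(\abs{F_{j,n}}^2,\abs{F_{k,n}}^2)=\sum_{\lambda_m>0}\langle\pi_{\lambda_m}(\abs{F_{j,n}}^2),\pi_{\lambda_m}(\abs{F_{k,n}}^2)\rangle$.

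The engine is the fourth-moment phenomenon encoded in Corollary~\ref{cor:1}: marginal convergence of $F_{j,n}$ forces $\int_E\abs{\Gamma(F_{j,n},-\LL^{-1}F_{j,n})-\sigma_{j,j}^{(n)}}^2\diff\mu\to0$, which is equivalent to the vanishing in $L^2$ of every \emph{intermediate} projection $\pi_{\lambda_m}(\abs{F_{j,n}}^2)$ with $0<\lambda_m<2\lambda_{p_{j,n}}$, the only surviving fluctuation of $\abs{F_{j,n}}^2$ sitting at its top eigenvalue $2\lambda_{p_{j,n}}$. Consequently, in the spectral sum every term in which $\lambda_m$ is not simultaneously the top eigenvalue of both components carries at least one vanishing intermediate factor and is negligible. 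If the two eigenvalue indices diverge or stabilise to distinct values there is no common top eigenvalue, the whole covariance tends to zero, and since then $\sigma_{j,k}^{(n)}=\langle F_{j,n},F_{k,n}\rangle=0$ by orthogonality of distinct eigenspaces, we obtain $T_{j,k}(n)\to0$, contradicting the choice of subsequence.

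The genuinely delicate case is the one in which $F_{j,n}$ and $F_{k,n}$ land in a common eigenspace with eigenvalue $\lambda_l$: here the top contribution $\langle\pi_{2\lambda_l}(\cdot),\pi_{2\lambda_l}(\cdot)\rangle$ survives, and this is precisely where hypothesis~2 enters. It fixes the norm of the top-chaos component $\pi_{2\lambda_l}(F_{j,n_l}^2)$ of the holomorphic square to its Gaussian value $2\bigl(\int_E\abs{F_{j,n_l}}^2\diff\mu\bigr)^2$; combined with the marginal control of all intermediate pieces and the assumed convergence of the covariance, a polarisation (Cauchy–Schwarz) argument on the symmetric bilinear map $(F,G)\mapsto\pi_{2\lambda_l}(FG)$ then pins the surviving cross contribution down to $\abs{\sigma_{j,k}}^2$, so that $T_{j,k}(n)\to0$ in this case as well. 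I expect this common-eigenspace off-diagonal term to be the main obstacle: unlike the Wiener-chaos situation underlying the classical Peccati–Tudor theorem~\cite{peccati_gaussian_2005}, the abstract diffusion framework provides no contraction Cauchy–Schwarz for free, and hypothesis~2 is exactly the minimal extra input compensating for its absence. The remaining, purely complex-specific, subtlety is that the bound~\eqref{eq:21} also involves the second carré-du-champ term $\Gamma(\overline{F},-\LL^{-1}F)$, which must be treated in parallel with $\Gamma(F,-\LL^{-1}F)-\Sigma$ throughout; modulo this, the whole scheme is the complex analogue of~\cite[Proposition~3.5]{campese_multivariate_2015}.
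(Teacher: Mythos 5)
Your overall architecture is the one the paper intends: the paper gives no proof of this proposition, only the remark that it follows by adapting \cite[Proposition 3.5]{campese_multivariate_2015}, and your reduction to $\Psi_1(F_n),\Psi_2(F_n),\Psi_3(F_n)\to 0$ via Theorem~\ref{thm:4}, the treatment of $\Psi_1$, $\Psi_2$ and the diagonal of $\Psi_3$ by the one-dimensional Fourth Moment Theorem, the spectral decomposition of $\on{Cov}(\abs{F_{j,n}}^2,\abs{F_{k,n}}^2)$ using ergodicity, and the elimination of every term except the one at a common top eigenvalue are all correct and in the spirit of the cited real proof.

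The gap is exactly where you locate it, and what you write does not close it. In the common-eigenspace case you must show $\langle\pi_{2\lambda_l}(\abs{F_{j,n_l}}^2),\pi_{2\lambda_l}(\abs{F_{k,n_l}}^2)\rangle\to\abs{\sigma_{j,k}}^2$, and ``a polarisation (Cauchy--Schwarz) argument on $(F,G)\mapsto\pi_{2\lambda_l}(FG)$'' does not produce this. Polarising over $F_j\pm F_k$ and $F_j\pm\mi F_k$ requires the intermediate projections of $\abs{F_j\pm F_k}^2$ to vanish, i.e.\ that these combinations themselves satisfy the fourth-moment condition --- which is equivalent to the joint convergence being proved, so the argument is circular. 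Cauchy--Schwarz applied to the top projections yields only $\abs{\langle\pi_{2\lambda_l}(\abs{F_j}^2),\pi_{2\lambda_l}(\abs{F_k}^2)\rangle}\leq\sigma_{j,j}\sigma_{k,k}+o(1)$, an upper bound that does not identify the limit as $\abs{\sigma_{j,k}}^2$; and for a vector-valued symmetric bilinear map $B$ the inequality $\norm{B(F,G)}^2\leq\norm{B(F,F)}\norm{B(G,G)}$ is false in general, so there is no Cauchy--Schwarz on the map itself. Moreover, hypothesis~2 as stated constrains only the single-component quantity $\norm{\pi_{2\lambda_l}(F_{j,n_l}^2)}_{L^2}^2$, and once marginal fourth-moment convergence is granted this quantity is already forced to its limit $2\sigma_{j,j}^2$ by $\int_E\abs{\Gamma(\overline{F}_{j},-\LL^{-1}F_{j})}^2\diff{\mu}\to 0$ (which is part of the one-dimensional bound for the single component $F_j$); so you must explain what new information it injects into the \emph{cross} term --- as used in your sketch it is doing no work. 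This is precisely the point where the abstract framework lacks the contraction-exchange identity of Wiener chaos, and an actual argument controlling $\pi_{2\lambda_l}(F_{j,n_l}\overline{F}_{k,n_l})$ and $\pi_{2\lambda_l}(F_{j,n_l}F_{k,n_l})$ (the quantities that actually enter $\Gamma(F_j,-\LL^{-1}F_k)-\sigma_{j,k}$ and $\Gamma(\overline{F}_j,-\LL^{-1}F_k)$) is required; asserting that polarisation ``pins it down'' is not such an argument.
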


{
In~\cite{azmoodeh_fourth_2014}, Fourth Moment Theorems for the Gamma and
Beta distribution were derived. We would like to mention that our techniques
could be readily applied to extend these results and cover
target random variables whose real and imaginary parts are independent real
Gamma or Beta random variables (in the Ornstein-Uhlenbeck case, this has been
done for the Gamma case in~\cite{chen_fourth_2014} by treating real and
imaginary part separately).
}

{
\section{Acknowledgements}
\label{s--begintheorem-}
The author thanks Domenico Marinucci for many useful remarks and a careful
reading of an earlier version of this paper.
}

\bibliography{refs_new}
\end{document}